\documentclass[11pt]{amsart}

\usepackage[T1]{fontenc}
\usepackage[utf8]{inputenc}
\usepackage{lmodern}
\usepackage{amsmath,amssymb,amsthm,mathrsfs}
\usepackage{enumitem}
\usepackage{hyperref}
\usepackage[margin=1.15in]{geometry}
\usepackage{pdfsync}
\usepackage{xcolor}
\numberwithin{equation}{section}

\hypersetup{colorlinks=true,linkcolor=blue,citecolor=blue,urlcolor=blue}

\newtheorem{theorem}{Theorem}[section]

\newtheorem{lemma}[theorem]{Lemma}
\newtheorem{corollary}[theorem]{Corollary}
\theoremstyle{definition}

\newtheorem{conjecture}[theorem]{Conjecture}
\theoremstyle{remark}
\newtheorem{remark}[theorem]{Remark}

\newcommand{\C}{\mathbb C}

\newcommand{\norm}[1]{\left\lVert #1\right\rVert}

\newcommand{\Ree}{\operatorname{Re}}
\newcommand{\Imm}{\operatorname{Im}}
\def\i{\mathrm{i}}
\def\bS{\mathbb{T}}
\renewcommand{\Re}{\operatorname{Re}}
\renewcommand{\Im}{\operatorname{Im}}

\title[Area operators on Hardy spaces of Dirichlet series]{Area operators on Hardy spaces of Dirichlet series II: counterexamples and compactness criteria}

\author[J. Chen]{Jiale Chen}
\address{Jiale Chen, School of Mathematics and Statistics, Shaanxi Normal University, Xi'an 710119, China.}
\email{jialechen@snnu.edu.cn}

\author[M. Wang]{Maofa Wang}
\address{Maofa Wang, School of Mathematics and Statistics, Wuhan University, Wuhan 430072, China.}
\email{mfwang.math@whu.edu.cn}
	
\author[Z. Yuan]{Zixing Yuan}
\address{Zixing Yuan, Institute of Mathematics, Hebei University of Technology, Tianjin 300401, China.}
\email{zxyuan.math@whu.edu.cn}

\thanks{Chen is supported by National Natural Science Foundation (No. 12501170), Young Talent Fund of Association for Science and Technology in Shaanxi (No. 20260506) and the Fundamental Research Funds for the Central Universities (No. GK202601003) of China. Wang is supported by National Natural Science Foundation (No. 12571145) of China.}

\date{September 5, 2026}
\subjclass[2020]{30B50, 30H10, 47B38}
\keywords{Hardy space of Dirichlet series, area operator, Carleson measure, compactness, Volterra operator}

\begin{document}

\begin{abstract}
We study the area operators $\mathbb{A}_{\mu,l}$, $0<l<\infty$, induced by positive Borel measures on the right half-plane and acting on the Hardy spaces of Dirichlet series $\mathscr H^p$, $0<p<\infty$. We first disprove a conjecture proposed by the present authors in an earlier work by constructing a probability measure, valid for all $0<p,l<\infty$, for which the associated area operator is bounded although the measure fails the proposed Carleson conditions. 
We next investigate compactness of these operators. For every $0<p<\infty$, we characterize boundedness and compactness of $\mathbb{A}_{\mu,p}$ on both $\mathscr H^p$ and the Hardy space $\mathscr H^p_0$ of Dirichlet series vanishing at $+\infty$; in particular, boundedness and compactness coincide for these operators. For general $0<p,l<\infty$, we further establish sufficient conditions for compactness in terms of vanishing Carleson measures and compact $H_{\i}^p$-Carleson embeddings. 
As an application, we also give a different proof of a known compactness result for Volterra operators on $\mathscr H^p$ with Dirichlet series symbols in $\operatorname{VMOA}(\mathbb C_0)$.
\end{abstract}

\maketitle

\section{Introduction}

Area operators on classical spaces of analytic functions have been studied
extensively.  Let $\mathbb D$ be the unit disk and $\mathbb T:=\partial\mathbb D$.
Given $\gamma>1$ and $\xi\in\mathbb T$, the non-tangential approach region
with vertex $\xi$ is defined by
\[
 \Gamma_\gamma(\xi)
 :=\{z\in\mathbb D:|\xi-z|<\gamma(1-|z|)\}.
\]
For a positive Borel measure $\mu$ on $\mathbb D$ and $0<l<\infty$, define
\[
 A_\mu^l f(\xi)
 :=\left(\int_{\Gamma_\gamma(\xi)}|f(z)|^l
 \frac{d\mu(z)}{1-|z|}\right)^{1/l},
 \qquad \xi\in\mathbb T.
\]
When $l=1$, we simply write $A_\mu=A_\mu^1$.  Cohn \cite{Cohn}
introduced the operator $A_\mu$ and proved that, for every $0<p<\infty$,
$A_\mu$ is bounded from the classical Hardy space $H^p(\mathbb D)$ into the Lebesgue space $L^p(\mathbb T)$ if and only if $\mu$
is a Carleson measure on $\mathbb D$.  Gong, Lou and Wu \cite{GongLouWu}
subsequently studied the more general cases
$A_\mu^l:H^p(\mathbb D)\to L^q(\mathbb T)$.  For $0<p\le q<\infty$,
the boundedness of $A_\mu^l:H^p(\mathbb D)\to L^q(\mathbb T)$ is equivalent to $\mu$ being a $\beta$-Carleson measure with
$\beta=1+l(1/p-1/q)$, that is,
$\mu(S(I))\lesssim |I|^\beta$ for every arc $I\subset\mathbb T$, where
$S(I)$ is the Carleson square generated by $I$ and $|I|$ denotes the length of $I$.  Gong, Lou and Wu also
treated several cases with $q<p$.  More recently, Lv and Pau
\cite{LvPau} gave a complete characterization of the boundedness of
$A_\mu^l:H^p(\mathbb D)\to L^q(\mathbb T)$ for all
$0<p,q,l<\infty$; in fact, their result is proved in the more general
setting of the unit ball.  Compactness is described by the corresponding vanishing Carleson measure conditions; see \cite{LiuLouZhao} and the references
therein.  In particular, when $p=q$, one has
\[
 A_\mu^l:H^p(\mathbb D)\to L^p(\mathbb T)
 \quad\hbox{is bounded}
 \quad\Longleftrightarrow\quad
 \mu\ \hbox{is a Carleson measure},
\]
whereas
\[
 A_\mu^l:H^p(\mathbb D)\to L^p(\mathbb T)
 \quad\hbox{is compact}
 \quad\Longleftrightarrow\quad
 \mu\ \hbox{is a vanishing Carleson measure}.
\]
Thus boundedness and compactness do not coincide in general in the
classical Hardy space setting, even when $p=l$.  Related area operators
on Bergman spaces and their weighted variants have also been studied;
see \cite{ArroussiTaskinenTongYuan,LvPau-JOT,LvPauWang,PangPeralaWang,PelaezRattyaSierra,WuBergman} and the references therein.

The situation for Hardy spaces of Dirichlet series turns out to be
substantially different.  In our previous work \cite{ChenWangYuan},
motivated by Cohn's construction, we introduced area operators in the
Dirichlet series setting.  Let $\mu$ be a positive Borel measure on
$\mathbb C_0:=\{s=\sigma+it:\sigma>0\}$ and let $0<l<\infty$.  For
$\tau\in\mathbb R$, put
\[
 \Gamma_\tau:=\{\sigma+it\in\mathbb C_0:|t-\tau|<\sigma\}.
\]
The associated area operator $\mathbb{A}_{\mu,l}$ is formally defined for Dirichlet series $f$ by
\[
 \mathbb{A}_{\mu,l}f(\chi,\tau)
 :=\left(\int_{\Gamma_\tau}|f_\chi(s)|^l
   \frac{d\mu(s)}{\Ree s}\right)^{1/l},
\]
where $f_\chi$ is the vertical limit of $f$. We proved in \cite{ChenWangYuan} that for $0<p,l<\infty$, an ordinary Carleson measure on $\mathbb C_0$ induces a bounded area operator $\mathbb{A}_{\mu,l}$ on the Hardy space $\mathscr H_0^p$ of Dirichlet series vanishing at $+\infty$, and that an $H_{\i}^p(\mathbb C_0)$-Carleson measure gives a sufficient condition for $\mathbb{A}_{\mu,l}$ to be bounded on the full Hardy space $\mathscr H^p$ of Dirichlet series.  This led us to ask in \cite[Conjecture 5.1]{ChenWangYuan} whether the corresponding Carleson measure conditions on the half-plane $\mathbb{C}_{1/2}$ are necessary. In this paper, we are going to construct a counterexample to disprove the aforementioned conjecture, and then investigate the compactness of area operators $\mathbb{A}_{\mu,l}$ on Hardy spaces $\mathscr{H}^p$ and $\mathscr{H}^p_0$.

The first phenomenon established in the present paper is that the classical Carleson measure description is no longer valid for Hardy spaces of Dirichlet series.  We construct a probability measure, valid simultaneously for every $0<p,l<\infty$, for which the associated area operator is bounded although the measure fails the Carleson measure conditions proposed in \cite[Conjecture 5.1]{ChenWangYuan}.  Thus, unlike the classical Hardy space case, the usual Carleson measure condition is not necessary for boundedness of area operators on Hardy spaces of Dirichlet series.

A second difference appears when $p=l$.  In the classical setting, as
recalled above, boundedness corresponds to a Carleson measure condition while
compactness requires its vanishing version, so the two properties do not
coincide in general.  For the setting of Hardy spaces of Dirichlet series, however, we
prove that, for every $0<p<\infty$, $\mathbb A_{\mu,p}:\mathscr H^p\to
L_{\i}^p$ is bounded if and only if it is compact, where the precise definition of
the natural target space $L_{\i}^p$ is given in Section
\ref{sec:preliminaries}.  The same
equivalence also holds on $\mathscr H_0^p$.  This gives another clear
difference between area operators on classical Hardy spaces and those on
Hardy spaces of Dirichlet series. When \(p\ne l\), however,
a complete characterization of the boundedness and compactness of
\(\mathbb A_{\mu,l}\) remains open.

For arbitrary $0<p,l<\infty$, we also give two sufficient conditions for
compactness. We show that a vanishing Carleson measure on $\mathbb C_0$ induces a
compact area operator on $\mathscr H_0^p$, while compactness of the
embedding $H_{\i}^p(\mathbb C_0)\hookrightarrow L^p(\mathbb{C}_0,\mu)$ implies
compactness of $\mathbb{A}_{\mu,l}$ on $\mathscr H^p$.  The compactness argument also differs from
the usual one in the classical Hardy space setting, where the contribution
from the region away from the boundary is typically handled by local uniform
convergence on compact subsets of $\mathbb D$.  In the present setting,
the area operator involves the whole family of vertical limits $f_\chi$,
$\chi\in\mathbb T^\infty$, and this does not directly give the required
uniform control in $\chi$.  We therefore split the measure into a part near
the boundary and a part away from the boundary, and handle the latter by the
compactness of horizontal translations on $\mathscr H^p$.  The
same argument applies to the classical setting, with horizontal
translations replaced by the radial dilations $f(\cdot)\mapsto f(r\cdot)$,
$0<r<1$. As an application, we give a different proof of the compactness result for Volterra operators with Dirichlet series symbols in $\operatorname{VMOA}(\mathbb C_0)$, first obtained by Brevig, Perfekt and Seip in \cite[Section~7]{BPSVolterra}. 

The paper is organized as follows.  Section \ref{sec:preliminaries}
contains some definitions and preliminary results.  In Section \ref{sec:counterexample} we give a counterexample that disproves \cite[Conjecture 5.1]{ChenWangYuan}.  Section \ref{sec:diagonal} is devoted to the
characterization of boundedness and compactness for $\mathbb A_{\mu,p}$ on the spaces $\mathscr{H}^p_0$ and $\mathscr{H}^p$, and the general
sufficient conditions for compactness are established in Section
\ref{sec:compactness}. We finally give an application to Volterra operators in Section \ref{sec:volterra}.

Throughout the paper, we write $A\lesssim B$ if $A\leq CB$ for some
inessential constant $C>0$, and $A\asymp B$ if $A\lesssim B\lesssim A$. We also write $C_{p,l}$ for positive constants that depends on the indexes $p,l$.
For $\theta\in\mathbb R$, we set
$\mathbb C_\theta:=\{s\in\mathbb C:\Re(s)>\theta\}$.
If $f(s)=\sum_{n=1}^{\infty}a_nn^{-s}$ is a Dirichlet series, then
$f(+\infty)$ denotes its constant coefficient $a_1$.  For a positive Borel
measure $\mu$ on $\mathbb C_0$ and $\varepsilon>0$, we write
$\mu_\varepsilon:=\mu|_{\{0<\sigma<\varepsilon\}}$ and
$\mu^\varepsilon:=\mu|_{\{\sigma\ge\varepsilon\}}$.

\section{Definitions and preliminaries}\label{sec:preliminaries}

In this section, we recall some definitions and preliminary results that will be used throughout the paper.

Let $\bS^{\infty}$ be the countably infinite-dimensional torus carrying the
normalized Haar measure $m_{\infty}$, which coincides with the product of the
normalized Lebesgue measure $m$ on the unit circle $\bS$. We identify
$\bS^{\infty}$ with the group of characters $\chi$ on the positive integers
satisfying
\begin{enumerate}[label=\textup{(\roman*)}]
 \item $|\chi(n)|=1$ for every $n\geq1$; and
 \item $\chi(mn)=\chi(m)\chi(n)$ for every $m,n\geq1$.
\end{enumerate}
Indeed, if $z=(z_1,z_2,\ldots)\in\bS^{\infty}$ and
$\mathfrak p=\{\mathfrak p_j\}_{j\geq1}$ is the increasing sequence of prime
numbers, then the corresponding character is determined by
\[
 \chi(2)=z_1,\qquad \chi(3)=z_2,\qquad
 \ldots,\qquad \chi(\mathfrak p_j)=z_j,\qquad \ldots,
\]
and extended multiplicatively.

Given a Dirichlet series $f(s)=\sum_{n=1}^{\infty}a_nn^{-s}$ and
$\chi\in\bS^{\infty}$, its vertical limit function $f_{\chi}$ is formally defined by
\[
 f_{\chi}(s):=\sum_{n=1}^{\infty}a_n\chi(n)n^{-s}.
\]
If $f$ converges uniformly on $\overline{\mathbb C_\theta}$, then Kronecker's
theorem implies that for every $\chi\in\bS^{\infty}$ there exists a sequence
$\{\tau_k\}\subset\mathbb R$ such that $f(s+i\tau_k)$ converges to
$f_\chi(s)$ uniformly on $\overline{\mathbb C_\theta}$; see, for instance,
\cite[Lemma 2.4]{HLS}.

The Hardy space $\mathscr H^2$, introduced by Hedenmalm, Lindqvist and Seip
\cite{HLS}, is the space of Dirichlet series
$f(s)=\sum_{n=1}^{\infty}a_nn^{-s}$ satisfying
\[
 \|f\|_{\mathscr H^2}^2:=\sum_{n=1}^{\infty}|a_n|^2<\infty.
\]
For general $0<p<\infty$, the Hardy space $\mathscr H^p$ can be defined by
means of the Bohr lift. By the fundamental theorem of arithmetic, each
$n\in\mathbb N$ has a unique representation
$n=\mathfrak p^{\alpha(n)}=\mathfrak p_1^{\alpha_1(n)}\mathfrak p_2^{\alpha_2(n)}\cdots$,
where $\alpha(n)=(\alpha_1(n),\alpha_2(n),\ldots)\in\mathbb N_0^{(\infty)}$
is an eventually null sequence of non-negative integers. The Bohr lift of $f(s)=\sum_{n=1}^{\infty}a_nn^{-s}$ is the formal Fourier series
\[
 \mathcal Bf(z):=\sum_{n=1}^{\infty}a_nz^{\alpha(n)},
 \qquad z=(z_1,z_2,\ldots)\in\bS^{\infty}.
\]
For $0<p<\infty$, $\mathscr H^p$ is the completion of Dirichlet
polynomials with respect to the (quasi-)norm
\[
 \|P\|_{\mathscr H^p}
 :=\left(\int_{\bS^{\infty}}|\mathcal BP(z)|^p\,dm_{\infty}(z)\right)^{1/p}.
\]
The Bohr lift is an isometric isomorphism from $\mathscr H^p$ onto the
analytic Hardy space $H^p(\bS^{\infty})$; see
\cite{Bayart,BBSS,HLS,QueffelecQueffelec}. For $0<p<\infty$, let
$\mathscr H_0^p$ denote the closed subspace of $\mathscr H^p$ consisting of
Dirichlet series $f$ satisfying $f(+\infty)=0$.

To determine the behavior of the vertical limit functions $f_{\chi}$ for $f\in\mathscr H^p$, we introduce
the conformally invariant Hardy space. For $0<p<\infty$ and $\theta\in\mathbb{R}$, the conformally invariant Hardy space $H_{\i}^p(\mathbb C_{\theta})$ consists of
analytic functions $f$ on $\mathbb C_{\theta}$ for which
\[
 \|f\|_{H_{\i}^p(\mathbb C_{\theta})}^p
 :=\sup_{\sigma>\theta}\frac1\pi\int_{\mathbb R}
 |f(\sigma+it)|^p\frac{dt}{1+t^2}<\infty.
\]
We also recall that the usual Hardy space $H^p(\mathbb C_{\theta})$ consists of
analytic functions $f$ on $\mathbb C_{\theta}$ satisfying
\[
 \|f\|_{H^p(\mathbb C_{\theta})}^p
 :=\sup_{\sigma>\theta}\int_{\mathbb R}|f(\sigma+it)|^p\,dt<\infty.
\]
It is well-known that for every $f\in\mathscr H^p$, the vertical limit $f_\chi$ extends analytically
to an element of $H_{\i}^p(\mathbb C_0)$ for $m_\infty$-almost every
$\chi\in\bS^\infty$; see \cite[Theorem 5]{Bayart}. Moreover, Fubini's theorem gives
\begin{equation}\label{eq:vertical-limit-disintegration}
 \|f\|_{\mathscr H^p}^p
 =\int_{\bS^\infty}\|f_\chi\|_{H_{\i}^p(\mathbb C_0)}^p
 \,dm_\infty(\chi).
\end{equation}
We will use these almost everywhere analytic representatives of the vertical limit functions repeatedly.

For $\alpha>0$ and $\tau\in\mathbb R$, let
\[
 \Gamma_\tau(\alpha)
 :=\{\sigma+it\in\mathbb C_0:|t-\tau|<\alpha\sigma\}.
\]
We write $\Gamma_\tau:=\Gamma_\tau(1)$. Given an analytic function $f$ on
$\mathbb C_0$, its Lusin area integral is
\[
 Sf(\tau)
 :=\left(\int_{\Gamma_\tau}|f'(\sigma+it)|^2\,d\sigma\,dt\right)^{1/2},
 \qquad \tau\in\mathbb R.
\]
The Dirichlet series analogue of the Calder\'on area theorem asserts that, for
$0<p<\infty$ and $f\in\mathscr H^p$,
\begin{equation}\label{eq:Calderon-area}
 \|f\|_{\mathscr H^p}^p
 \asymp |f(+\infty)|^p
 +\int_{\bS^\infty}\int_{\mathbb R}S(f_\chi)(\tau)^p
 \frac{d\tau}{1+\tau^2}\,dm_\infty(\chi),
\end{equation}
with constants depending only on $p$; see
\cite{BrevigPerfekt,BrevigPerfektCorrection,ChenWangIntegration}.

We now recall the area operators introduced in \cite{ChenWangYuan}. For $l>0$ and a positive Borel measure $\mu$ on $\mathbb C_0$, the area operator $\mathbb A_{\mu,l}$ is defined for $f\in\bigcup_{0<p<\infty}\mathscr{H}^p$ by
\begin{equation}\label{eq:def-area-operator}
 \mathbb A_{\mu,l}f(\chi,\tau)
 :=\left(\int_{\Gamma_\tau}|f_\chi(s)|^l
 \frac{d\mu(s)}{\Re(s)}\right)^{1/l},
 \qquad \chi\in\bS^\infty,\quad \tau\in\mathbb R.
\end{equation}
For $0<p<\infty$, let $L_{\i}^p(\bS^\infty\times\mathbb R)$ be the space of
measurable functions $f$ on $\bS^\infty\times\mathbb R$ such that
\[
 \|f\|_{L_{\i}^p(\bS^\infty\times\mathbb R)}^p
 :=\int_{\bS^\infty}\int_{\mathbb R}|f(\chi,\tau)|^p
 \frac{d\tau}{1+\tau^2}\,dm_\infty(\chi)<\infty.
\]
For simplicity, we write $L_{\i}^p:=L_{\i}^p(\bS^\infty\times\mathbb R)$ below.
For a positive Borel measure $\mu$, the preceding measurable representative of
$f_\chi$ makes $\mathbb A_{\mu,l}f$ well defined for almost every
$(\chi,\tau)\in \bS^\infty\times\mathbb R$, with the value $+\infty$ allowed.

We next recall the Carleson measure conditions used in the sequel.
For $\theta\in\mathbb R$ and a finite interval
$I\subset\mathbb R$, put
\[
Q_\theta(I)
:=\{\sigma+it:\theta<\sigma\le\theta+|I|,\ t\in I\},
\]
where $|I|$ denotes the length of $I$. We write $Q(I)=Q_0(I)$ for the case $\theta=0$. A positive Borel
measure $\mu$ on $\mathbb C_{\theta}$ is called a Carleson measure if
\[
 \|\mu\|_{CM(\mathbb C_{\theta})}
 :=\sup_{0<|I|<+\infty}
 \frac{\mu\big(Q_\theta(I)\big)}{|I|}<\infty,
\]
and it is called a vanishing Carleson measure on $\mathbb{C}_{\theta}$ if
\[
\lim_{h\to0^+}\sup_{|I|\le h}\frac{\mu(Q_{\theta}(I))}{|I|}=0.
\]
Recall that $H_{\i}^p(\mathbb C_\theta)$ is the conformally invariant Hardy space on $\C_{\theta}$. A positive Borel measure $\mu$ on $\mathbb C_\theta$ is called an
$H_{\i}^p(\mathbb C_\theta)$-Carleson measure if the embedding
$I_\mu:H_{\i}^p(\mathbb C_\theta)\to L^p(\C_{\theta},\mu)$ is bounded, and it is called a
compact $H_{\i}^p(\mathbb C_\theta)$-Carleson measure if the same embedding is
compact. When $\theta=0$ and $I_\mu$ is bounded, we write
\[
 \|I_\mu\|^p
 :=\sup_{\|f\|_{H_{\i}^p(\mathbb C_0)}\le1}
 \int_{\mathbb C_0}|f(s)|^p\,d\mu(s).
\]

The following theorem, proved in \cite[Theorems 2.3 and 2.5]{ChenWangYuan}, establishes sufficient conditions for the area operators $\mathbb A_{\mu,l}$ to be bounded on the Hardy spaces $\mathscr{H}^p$ or $\mathscr{H}^p_0$.

\begin{theorem}[\cite{ChenWangYuan}]\label{thm:CYW}
Let $0<p,l<\infty$.
\begin{enumerate}[label=\textup{(\alph*)}]
\item If $\mu$ is a Carleson measure on $\mathbb C_0$, then
$
 \mathbb A_{\mu,l}:\mathscr H_0^p\to L_{\i}^p
$
is bounded, and
\[
 \|\mathbb A_{\mu,l}\|_{\mathscr H_0^p\to L_{\i}^p}
 \le C_{p,l}\|\mu\|_{CM(\mathbb C_0)}^{1/l}.
\]
\item If $\mu$ is an $H_{\i}^p(\mathbb C_0)$-Carleson measure, then
$
 \mathbb A_{\mu,l}:\mathscr H^p\to L_{\i}^p
$
is bounded, and
\[
 \|\mathbb A_{\mu,l}\|_{\mathscr H^p\to L_{\i}^p}
 \le C_{p,l}\|I_\mu\|^{p/l}.
\]
\end{enumerate}
\end{theorem}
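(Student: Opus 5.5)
The plan is to reduce both parts to estimates for each fixed vertical limit $f_\chi$ and then integrate over $\bS^\infty$ using \eqref{eq:vertical-limit-disintegration}. Fix $\chi$ for which $f_\chi\in H_{\i}^p(\mathbb C_0)$ and write $F=f_\chi$. The central estimate is then a half-plane area-operator bound of the form
\[
\int_{\mathbb R}\Big(\int_{\Gamma_\tau}|F(s)|^l\frac{d\mu(s)}{\Re s}\Big)^{p/l}\frac{d\tau}{1+\tau^2}\lesssim(\text{constant})\cdot\|F\|^p_{H_{\i}^p(\mathbb C_0)},
\]
with constant $\|\mu\|_{CM}^{p/l}$ in case (a), for $F$ coming from $\mathscr H^p_0$, and $\|I_\mu\|^{p^2/l}$ in case (b). To prove it, I would follow Cohn's argument for the classical setting, transferred to the weighted measure $d\tau/(1+\tau^2)$. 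The natural device is the conformal map from $\mathbb C_0$ to $\mathbb D$: under it, $d\tau/(1+\tau^2)$ becomes arclength measure and $H_{\i}^p(\mathbb C_0)$ becomes $H^p(\mathbb D)$.

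\textbf{Case (a).} The obstacle is that a Carleson measure on $\mathbb C_0$ in the translation-invariant sense does not transform into a Carleson measure on $\mathbb D$. Boxes with $|I|$ large, or located far out at large $|t|$, behave badly under the map. So I would split $\mu=\mu_1+\mu^1$.

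\emph{The part $\mu_1$ near the boundary.} Here cones of height at most $1$ and Carleson boxes of side at most $1$ are local. On each unit interval $J_k=[k,k+1]$ the weight $1/(1+\tau^2)$ is essentially the constant $1/(1+k^2)$. I would apply the classical local estimate: for $l\le p$, use a tent-space/duality argument with the nontangential maximal function $F^*$. This gives
\[
\int_{J_k}\mathbb A_{\mu_1,l}F^{\,p}\,d\tau\lesssim\|\mu\|_{CM}^{p/l}\int_{3J_k}(F^*)^p\,d\tau .
\]
For $l>p$ the same bound should follow via the good-$\lambda$ and stopping-time argument of Gong, Lou and Wu. Summing over $k$ with the weights and using the maximal-function bound for $H_{\i}^p$ handles $\mu_1$.

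\emph{The part $\mu^1$ away from the boundary.} This is where $f(+\infty)=0$ enters. On $\sigma\ge1$ there is exponential decay $|f_\chi(s)|\lesssim 2^{-(\sigma-1/2)}\|f\|_{\mathscr H^p}$, uniformly in $\chi$. This comes from writing $f=2^{-s}g$ up to controlled terms, or from the standard bound on $\mathscr H^p_0$ for $\Re s>1/2$. Combined with $\mu(Q(I))\le\|\mu\|_{CM}|I|$, the cone integral $\int_{\Gamma_\tau\cap\{\sigma\ge1\}}\sigma^{-1}2^{-l\sigma}d\mu$ is bounded by a dyadic sum in $\sigma$. That sum converges uniformly in $\tau$ and gives $\|\mu\|_{CM}^{1/l}\|f\|$. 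I expect this step to be the main obstacle: the decay must beat both the width of the cone and the fact that $\mu^1$ can have mass comparable to $|I|$ on huge boxes.

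\textbf{Case (b).} I would avoid the splitting and use Fubini together with conformal invariance. Moving to the disk, the measure $\mu/\Re s$ pulled back becomes, up to the Jacobian, a measure $\nu$ on $\mathbb D$ whose Carleson character is equivalent to the $H^p(\mathbb D)$-Carleson embedding, i.e. to $\mu$ being $H_{\i}^p$-Carleson. Indeed, $H_{\i}^p$-Carleson measures correspond exactly to classical Carleson measures on $\mathbb D$, with constant comparable to $\|I_\mu\|^p$. The factor $1/\Re s$ transforms into $1/(1-|z|)$ up to bounded distortion on cones, since cones map to nontangential regions. Then Cohn's theorem, in the Gong, Lou and Wu version for general $l$ with $p=q$, gives the $\tau$-integral bound with constant $\|\nu\|_{CM}^{p/l}\asymp\|I_\mu\|^{p^2/l}$.

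Integrating over $\chi$ completes both parts, once we know the resulting function is jointly measurable in $(\chi,\tau)$, which follows from the measurable representatives chosen earlier.
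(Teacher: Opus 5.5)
Your plan for (a) is workable. After splitting at $\sigma=1$, the part $\mu|_{\{\sigma<1\}}$ only sees cones truncated at height $1$. So a localized Cohn/Gong--Lou--Wu estimate with the \emph{truncated} nontangential maximal function suffices; the untruncated one can be identically $+\infty$ for a general $F\in H^p_{\i}(\mathbb C_0)$. The far part, which you flag as the main obstacle, is routine. By \eqref{eq:zero-translation-decay} and \eqref{eq:point-evaluation}, $|f_\chi(s)|\le\zeta(2)^{1/p}2^{-(\Re s-1)}\|f\|_{\mathscr H^p}$ for $\Re s\ge1$. Also, the piece of $\Gamma_\tau$ with $2^j\le\sigma<2^{j+1}$ has $\mu$-mass at most $4\|\mu\|_{CM(\mathbb C_0)}2^j$, so the dyadic sum converges uniformly in $\tau$.

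The genuine gap is in (b), where you transfer the whole operator to $\mathbb D$ without splitting.

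\emph{Why the transfer fails.} The cone $\Gamma_\tau$ is unbounded. Under $\phi(s)=(s-1)/(s+1)$ its image accumulates at $\phi(\i\tau)$ and also at $1=\phi(\infty)$. Along $s=\sigma\to+\infty$ one has $|\phi(s)-\phi(\i\tau)|/(1-|\phi(s)|)\to\infty$, so $\phi(\Gamma_\tau)$ lies in no nontangential region $\Gamma_\gamma(\phi(\i\tau))$. Moreover $1/\Re s=|1-z|^2/(1-|z|^2)$, which is not comparable to $1/(1-|z|)$. Hence Cohn's theorem does not apply. The region near $\infty$, shared by every cone, is exactly where a general $F\in H^p_{\i}$ can be large.

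\emph{The one-variable estimate is false when $l>2p$.} Use the normalization your transfer relies on, namely that $F\mapsto F\circ\phi^{-1}$ is an isometry onto $H^p(\mathbb D)$. The measure $\mu=\sum_{k\ge1}2^{-k}\delta_{2^k}$ pushes forward to a Carleson measure on $\mathbb D$, so it is $H^p_{\i}$-Carleson. The function $F(s)=((s+1)/2)^a$ with $2/l\le a<1/p$ corresponds to $(1-z)^{-a}\in H^p(\mathbb D)$. Yet $\int_{\Gamma_\tau}|F|^l\,d\mu/\Re s\ge\sum_{2^k>|\tau|}2^{-al}2^{k(al-2)}=\infty$ for every $\tau$. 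So no bound in terms of $\|F\|_{H^p_{\i}}$ alone can hold.

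What is missing in (b) is the Dirichlet-series structure at large $\Re s$, which you did use in (a). The repair is to split at $\sigma=1$ in (b) as well.

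\emph{Near part.} On $\{\sigma<1\}$ the truncated cones do map into $\Gamma_\gamma(\phi(\i\tau))$ with $\gamma$ independent of $\tau$. The factor $|1-z|^2\le4$ only helps. So your disk argument applies to $\nu=|1-z|^2\,\phi_*(\mu|_{\{\sigma<1\}})\le4\,\phi_*\mu$, whose Carleson constant is $\lesssim_p\|I_\mu\|^p$.

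\emph{Far part.} On $\{\sigma\ge1\}$, apply \eqref{eq:point-evaluation} to the Dirichlet series $f_\chi$, which has the same $\mathscr H^p$-norm as $f$; this gives $|f_\chi(s)|^p\le\zeta(2)\|f\|_{\mathscr H^p}^p$. Testing $I_\mu$ on $F\equiv1$ gives $\int_{\Gamma_\tau\cap\{\sigma\ge1\}}d\mu/\sigma\le\mu(\mathbb C_0)\le\|I_\mu\|^p$. Hence this part contributes at most $\pi\zeta(2)\|I_\mu\|^{p^2/l}\|f\|_{\mathscr H^p}^p$ to $\|\mathbb A_{\mu,l}f\|_{L^p_{\i}}^p$.

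Adding the two parts with the quasi-triangle inequality gives (b).
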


We also need the horizontal translations $T_{\delta}$, defined for Dirichlet series $f$ by
\[
 T_\delta f(s):=f(s+\delta),\qquad \delta>0.
\]
Under the Bohr lift, $T_\delta$ corresponds to the coordinatewise dilation
$(z_j)_{j\ge1}\mapsto(\mathfrak p_j^{-\delta}z_j)_{j\ge1}$ and is contractive
on $\mathscr H^p$. It was proved in \cite[Theorem C]{BrevigPerfektCorrection} that for any $\delta\geq0$ and $f\in\mathscr H_0^p$,
\begin{equation}\label{eq:zero-translation-decay}
 \|T_\delta f\|_{\mathscr H^p}\le 2^{-\delta}\|f\|_{\mathscr H^p}.
\end{equation}
We shall use the standard point-evaluation estimate (see \cite[Theorem 3]{Bayart}): for $f\in\mathscr{H}^p$,
\begin{equation}\label{eq:point-evaluation}
 |f(s)|^p\le\zeta(2\Re(s))\|f\|_{\mathscr H^p}^p,
 \qquad \Re(s)>\tfrac12.
\end{equation}
The following theorem indicates that $T_{\delta}$ is compact on the Hardy spaces $\mathscr{H}^p$ whenever $\delta>0$.

\begin{theorem}\label{thm:translation-compact}
For every $\delta>0$ and every $0<p<\infty$, the map
$
 T_\delta:\mathscr H^p\to\mathscr H^p
$
is compact. The same is true on $\mathscr H_0^p$.
\end{theorem}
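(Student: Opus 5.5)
We will prove compactness of $T_\delta$ on $\mathscr H^p$ by showing it is a limit, in operator (quasi-)norm, of finite-rank operators. Compactness on $\mathscr H_0^p$ then follows because $\mathscr H_0^p$ is a closed subspace invariant under $T_\delta$.

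Fix $\delta>0$. The plan is to factor $T_\delta=T_{\delta/2}T_{\delta/2}$ and to use a coefficient estimate for the first factor. The basic input is the pointwise estimate \eqref{eq:point-evaluation} applied to $f_\chi$, or equivalently Bohr's coefficient estimate. Concretely, we will show that for $g\in\mathscr H^p$ with coefficients $b_n$ one has $|b_n|\le \|g\|_{\mathscr H^p}$ when $p\ge1$. This holds because $b_n$ is a Fourier coefficient of $\mathcal Bg$ on $\bS^\infty$.

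For $0<p<1$ this coefficient bound is not directly available. There we will use \eqref{eq:point-evaluation} together with a Cauchy-type coefficient formula on the half-plane $\mathbb C_{1}$, namely
\[
b_n n^{-\sigma}=\lim_{T\to\infty}\frac1{2T}\int_{-T}^{T}g(\sigma+it)n^{it}\,dt, \qquad \sigma>1.
\]
Combined with \eqref{eq:point-evaluation} this gives $|b_n|\le n^{\sigma}\zeta(2\sigma)^{1/p}\|g\|_{\mathscr H^p}$. Either way we obtain a polynomial bound $|b_n|\lesssim n^{c}\|g\|_{\mathscr H^p}$ for some fixed $c$.

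Applying this to $g=T_{\delta'}f$ with small $\delta'$ is too weak, since it only produces coefficients of size $n^{c-\delta'}$. We will therefore instead approximate $T_\delta$ by the finite-rank operators
\[
P_N f:=\sum_{n\le N}a_n n^{-\delta}n^{-s}.
\]
Controlling $\|T_\delta f-P_Nf\|_{\mathscr H^p}$ for small $p$ is the main obstacle, because the quasi-norm on the tail is not controlled by $\ell^2$ or $\ell^1$ bounds on the coefficients.

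We will handle this obstacle by splitting the translation into two pieces. Write $T_\delta=T_{\delta_1}\circ T_{\delta_2}$. We first use that $T_{\delta_2}$ maps $\mathscr H^p$ boundedly into $\mathscr H^2$ when $p<2$. This should follow from the Bohr lift and the hypercontractive dilation estimate (Weissler's inequality coordinatewise): the dilation $z_j\mapsto r_jz_j$ maps $H^p(\bS)$ to $H^2(\bS)$ contractively when $r_j\le\sqrt{p/2}$. This condition holds for all $j\ge J$, since $\mathfrak p_j^{-\delta_2}\to0$.

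The finitely many primes $j<J$ remain. In those variables we will use the fact that on $H^p(\bS^{J})$ any dilation with $r<1$ maps into $H^2$ boundedly, via subharmonicity and point-evaluation bounds of the form $|F(rz)|\lesssim_r\|F\|_p$. Together these give boundedness of $T_{\delta_2}:\mathscr H^p\to\mathscr H^2$.

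Next we use that $T_{\delta_1}:\mathscr H^2\to\mathscr H^2$ is compact. It is a diagonal operator with entries $n^{-\delta_1}\to0$, hence a norm limit of finite-rank diagonal truncations. Finally, we pass back into $\mathscr H^p$. For $p\le2$ this is immediate, since $\|\cdot\|_{\mathscr H^p}\le\|\cdot\|_{\mathscr H^2}$. For $p>2$ we will reverse the roles of the two factors: a translation maps $\mathscr H^2$ boundedly into $\mathscr H^p$ by the same hypercontractivity argument (dilation from $H^2$ to $H^p$ for $r_j\le\sqrt{2/p}$, with finitely many coordinates handled as before), and the inclusion $\mathscr H^p\hookrightarrow\mathscr H^2$ is contractive. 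In both cases $T_\delta$ factors as bounded $\circ$ compact $\circ$ bounded, hence it is compact.
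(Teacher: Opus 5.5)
Your argument is correct, but it takes a different route from the paper.

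\textbf{The paper's route.} The paper never leaves $\mathscr H^p$. It writes the Bohr-lifted translation as $D_{\boldsymbol r}\Pi_N+D_{\boldsymbol r}(I-\Pi_N)$. It controls the second piece by a Schwarz-lemma-type estimate along the common tail rotation, which gives $\|D_{\boldsymbol r}(I-\Pi_N)\|\le C_p\mathfrak p_{N+1}^{-\delta}$. Compactness of $D_{\boldsymbol r}\Pi_N$ then comes from Montel's theorem on $\mathbb D^N$.

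\textbf{Your route.} You factor $T_\delta$ through $\mathscr H^2$. On the primes with $\mathfrak p_j^{-\delta_2}\le\sqrt{\min\{p,2\}/\max\{p,2\}}$ you use Weissler's hypercontractive inequality, and on the finitely many remaining primes you use point-evaluation bounds on a finite polydisc. Compactness then reduces to the diagonal operator $(n^{-\delta_1})$ on $\mathscr H^2$.

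\textbf{Steps to write out.} When you write this up, make two steps explicit. First, the tensorization of the one-variable inequality to infinitely many variables: this is Minkowski's integral inequality in $L^{q/p}$, which is valid because the target exponent exceeds the source exponent in both of your cases, including $p<1$. Second, the head/tail combination: apply hypercontractivity in the tail variables for fixed head variables, then bound $|F(r'z',z'')|^p$ by $C\int|F(w',z'')|^p\,dw'$ uniformly in $z',z''$ (with $p$ replaced by $2$ when $p>2$).

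\textbf{Your first half.} The coefficient estimates and the direct attack on $P_N$ are not needed. The one exception is continuity of coefficient functionals, which identifies the density extension of $T_{\delta_2}$ with coefficientwise multiplication. In fact your factorization proves your opening claim: replacing $T_{\delta_1}$ by its $N$th diagonal truncation produces exactly $P_N$, so $\|T_\delta-P_N\|_{\mathscr H^p\to\mathscr H^p}\lesssim (N+1)^{-\delta_1}$.

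\textbf{Trade-off.} The paper's proof is self-contained and elementary (subharmonicity, monotonicity of one-variable Hardy means, Montel) and treats all $p$ uniformly, but its approximants are only compact, not finite rank. Yours imports a genuinely nontrivial theorem, namely Weissler's inequality for $p<1$. In return it gives the smoothing property that $T_\delta:\mathscr H^p\to\mathscr H^q$ is bounded for every $q$, together with norm approximation by finite-rank coefficient truncations at an explicit rate.
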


\begin{proof}
Fix $0<p<\infty$ and $\delta>0$. Work first with analytic polynomials in the Bohr model and put
\[
 D_{\boldsymbol r}f(z)=f(r_1z_1,r_2z_2,\ldots),
 \qquad r_j=\mathfrak p_j^{-\delta}.
\]
Thus $D_{\boldsymbol r}$ represents $T_\delta$. For $N\ge1$, let
\[
 \Pi_Nf(z)=f(z_1,\ldots,z_N,0,0,\ldots).
\]
Successive applications of the sub-mean value inequality to the tail variables
show that $\Pi_N$ is contractive on $H^p(\bS^\infty)$ (see \cite[Theorem 2.1]{BBSS}).
Consequently,
\[
 \|I-\Pi_N\|\le C_p,
\]
where one may take $C_p=2$ for $p\ge1$ and $C_p=2^{1/p}$ for $0<p<1$.

We claim that
\[
 \|D_{\boldsymbol r}(I-\Pi_N)\|
 \le C_pr_{N+1}.
\]
Indeed, let $g=(I-\Pi_N)f$, so that
$g(z',0)=0$ for $z'=(z_1,\ldots,z_N)$. Denote by
$R_\rho^{(N)}$ simultaneous dilation by $\rho$ in all variables after the
$N$th one. For fixed $z'=(z_1,\ldots,z_N)\in\mathbb{T}^N$ and  $z''=(0,\ldots,0,z_{N+1},z_{N+2},\ldots)\in\mathbb{T}^{\infty}$, the function
\[
 h_{z',z''}(\lambda)
 =\frac{g(z',\lambda z'')}{\lambda}
\]
has a removable singularity at the origin and is analytic in $\mathbb D$.
Monotonicity of its one-variable Hardy means gives
\[
 \frac1{2\pi}\int_0^{2\pi}
 |g(z',\rho e^{i\theta}z'')|^p\,d\theta
 \le \rho^p\frac1{2\pi}\int_0^{2\pi}
 |g(z',e^{i\theta}z'')|^p\,d\theta.
\]
Integrating in $z',z''$ and using Haar invariance under the common tail
rotation $z''\mapsto e^{i\theta}z''$ yields
\[
 \|R_\rho^{(N)}g\|_{H^p(\bS^{\infty})}
 \le \rho\|g\|_{H^p(\bS^{\infty})}.
\]
This estimate extends from analytic polynomials to all functions of
$H^p(\bS^\infty)$ by density.

For $j>N$ we have $r_j\le r_{N+1}$. Hence the tail part of
$D_{\boldsymbol r}$ factors as coordinatewise contractions with radii
$r_j/r_{N+1}$ followed by $R_{r_{N+1}}^{(N)}$. The dilation in the
first $N$ coordinates are contractive as well. Therefore,
\[
 \|D_{\boldsymbol r}(I-\Pi_N)f\|_{H^p(\bS^{\infty})}
 \le r_{N+1}\|(I-\Pi_N)f\|_{H^p(\bS^{\infty})}
 \le C_pr_{N+1}\|f\|_{H^p(\bS^{\infty})},
\]
and thus
\[
 \|D_{\boldsymbol r}-D_{\boldsymbol r}\Pi_N\|
 \le C_p\mathfrak p_{N+1}^{-\delta}\longrightarrow0
 \quad \text{as}\quad N\to\infty.
\]

For fixed $N$, the map $D_{\boldsymbol r}\Pi_N$ factors through
$H^p(\bS^N)$ and is a strict dilation in each of its finitely many
variables. To see that it is compact, let $\{f_k\}$ be a bounded sequence in
$H^p(\bS^\infty)$. Iterated subharmonicity shows that
$\{\Pi_Nf_k\}$ is locally uniformly bounded on $\mathbb D^N$. By Montel's
theorem, there exists a subsequence converging uniformly on compact subsets of
$\mathbb D^N$, in particular on the set
\[
 \{(r_1\zeta_1,\ldots,r_N\zeta_N):\zeta\in\bS^N\}.
\]
The corresponding subsequence of $\{D_{\boldsymbol r}\Pi_Nf_k\}$ therefore
converges uniformly on $\bS^N$, and hence in $H^p(\bS^{\infty})$.

It follows that $D_{\boldsymbol r}$ is a uniform limit of compact maps. Using the triangle inequality in the case $p\geq1$ and the fact that $H^p(\bS^{\infty})$ is a complete metric space under the metric
\[
d_p(f,g)=\|f-g\|_{H^p(\bS^{\infty})}^p
\]
in the case $0<p<1$, we obtain that $D_{\boldsymbol r}$ is compact on $H^p(\bS^{\infty})$. Hence $T_\delta$ is compact on
$\mathscr H^p$. Since $\mathscr H_0^p$ is a closed
$T_\delta$-invariant subspace, the restriction is compact there as well.
\end{proof}

\begin{remark}
For the case $1\leq p<\infty$, the preceding conclusion is contained in \cite[Corollary 3]{Bayart} and \cite[Theorem 1.3]{BQS}. The proof above is included to cover the case $0<p<1$ and to provide the form used below.
\end{remark}

We record a continuity fact that will be useful for the nonlinear
area operators. Let $L_{\i,+}^r$ denote the cone of almost-everywhere
nonnegative elements of $L_{\i}^r$. When $0<r<1$, $L_{\i}^r$ is regarded as a complete metric space with the metric
$d_r(U,V)=\|U-V\|_{L_{\i}^r}^r$.

\begin{lemma}\label{lem:root-map}
For $0<p,l<\infty$, the map
\[
R_l(U):=U^{1/l}
\]
is continuous from $L_{\i,+}^{p/l}$ to $L_{\i,+}^{p}$.
\end{lemma}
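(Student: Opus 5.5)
We work with the finite measure $d\nu:=\frac{d\tau}{1+\tau^2}\,dm_\infty(\chi)$ on $\bS^\infty\times\mathbb R$. Set $q:=p/l$, so the claim is that $U\mapsto U^{1/l}$ is continuous from $L^{q}(\nu)_+$ to $L^{p}(\nu)_+$. First, the map is well defined, since
\[
\|U^{1/l}\|_{L_{\i}^p}^p=\int U^{p/l}\,d\nu=\|U\|_{L_{\i}^{q}}^{q}<\infty .
\]
For continuity we split into the two regimes $l\ge1$ and $0<l<1$ and use an elementary pointwise inequality for $a,b\ge0$ in each.

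\textbf{Case $l\ge1$.} Here $t\mapsto t^{1/l}$ is concave on $[0,\infty)$ and vanishes at $0$, so it is subadditive. This gives
\[
|a^{1/l}-b^{1/l}|\le|a-b|^{1/l},\qquad a,b\ge0 .
\]
Hence
\[
\int|U^{1/l}-V^{1/l}|^p\,d\nu\le\int|U-V|^{p/l}\,d\nu=\|U-V\|_{L_{\i}^q}^q .
\]
This is a global Hölder-type estimate in the metrics involved, so it is valid whether $q\ge1$ or $q<1$. Continuity, in fact uniform continuity, follows.

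\textbf{Case $0<l<1$.} Now $\alpha:=1/l>1$, and the mean value theorem gives
\[
|a^\alpha-b^\alpha|\le\alpha|a-b|\,(a+b)^{\alpha-1}.
\]
Raising this to the power $p$ and integrating yields
\[
\int|U^{1/l}-V^{1/l}|^p\,d\nu\le\alpha^p\int|U-V|^p(U+V)^{p(\alpha-1)}\,d\nu .
\]
We apply Hölder's inequality with exponents $r=1/l$ and $r'=1/(1-l)$, both finite and larger than $1$ because $0<l<1$. The first factor is
\[
\Big(\int|U-V|^{pr}\,d\nu\Big)^{1/r}=\Big(\int|U-V|^{q}\,d\nu\Big)^{l}.
\]
The second factor is
\[
\Big(\int(U+V)^{p(\alpha-1)r'}\,d\nu\Big)^{1/r'}.
\]
Its exponent is
\[
p(\alpha-1)r'=p\cdot\frac{1-l}{l}\cdot\frac1{1-l}=\frac pl=q,
\]
so this factor equals $\|U+V\|_{L_{\i}^q}^{q(1-l)}$. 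This quantity stays bounded when $V$ ranges over a neighbourhood of $U$ in $L_{\i}^q$, using the quasi-triangle inequality if $q<1$. We conclude
\[
\|U^{1/l}-V^{1/l}\|_{L_{\i}^p}^p\lesssim\|U-V\|_{L_{\i}^q}^{ql}\,\|U+V\|_{L_{\i}^q}^{q(1-l)}\longrightarrow0
\]
as $V\to U$, which proves local uniform continuity.

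The only delicate point is the exponent bookkeeping in the Hölder step, in particular checking that the second factor involves exactly the $L^{q}$ norm. No property of the torus or of the Hardy spaces is used; the argument is purely measure-theoretic. An alternative route avoids the inequalities altogether: take a sequence $U_n\to U$, pass to an a.e. convergent subsequence, and apply Vitali's theorem. Uniform integrability of $|U_n^{1/l}-U^{1/l}|^p\lesssim U_n^{q}+U^{q}$ holds because $U_n^{q}\to U^{q}$ in $L^1(\nu)$. The subsequence principle then gives convergence of the full sequence.
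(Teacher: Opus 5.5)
Your proof is correct and follows essentially the same route as the paper: for $l\ge1$ you use $|u^{1/l}-v^{1/l}|\le|u-v|^{1/l}$, and for $0<l<1$ you use the mean value bound followed by H\"older's inequality with exponents $1/l$ and $1/(1-l)$ (the paper's $a$ and $a/(a-1)$), which produces exactly $L_{\i}^{p/l}$ norms. The only differences are cosmetic: you use the factor $(u+v)^{1/l-1}$ where the paper uses $u^{1/l-1}+v^{1/l-1}$, and you add an optional Vitali-type argument, which is also valid.
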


\begin{proof}
If $l\ge1$, then $0<1/l\le1$ and
\[
 |u^{1/l}-v^{1/l}|^p\le |u-v|^{p/l},\qquad u,v\ge0,
\]
so the assertion is immediate. Let $0<l<1$ and put $a=1/l>1$. Then
\[
 |u^a-v^a|\le a|u-v|(u^{a-1}+v^{a-1}),\qquad u,v\ge0.
\]
After taking the $p$th power and applying H\"older's inequality with
exponents $a$ and $a/(a-1)$, we obtain
\[
 \|U^a-V^a\|_{L_{\i}^p}
 \le C_{p,l}\|U-V\|_{L_{\i}^{pa}}
 \big(\|U\|_{L_{\i}^{pa}}^{a-1}+\|V\|_{L_{\i}^{pa}}^{a-1}\big).
\]
Since $pa=p/l$, the desired continuity also follows.
\end{proof}

We shall also use the following standard approximation principle for compact maps.

\begin{lemma}\label{lem:uniform-compact-approx}
Let $\Phi:X\to Y$ be a map from a quasi-normed space $X$ to a complete metric space $(Y,d)$. Suppose that there is a sequence of compact maps $\Phi_j:X\to Y$ such that for every $R>0$,
\[
\lim_{j\to\infty}\sup_{\|x\|_X\le R} d(\Phi_j x,\Phi x)=0.
\]
Then $\Phi:X\to Y$ is compact.
\end{lemma}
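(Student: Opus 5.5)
We argue directly from the sequential definition of compactness: a map is compact if it sends bounded sets into relatively compact sets. Since $(Y,d)$ is complete, a subset of $Y$ is relatively compact if and only if it is totally bounded. So it suffices to fix $R>0$, put $B_R:=\{x\in X:\|x\|_X\le R\}$, and show that $\Phi(B_R)$ is totally bounded in $(Y,d)$.

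Fix $\varepsilon>0$. By hypothesis we can choose $j$ with
\[
\sup_{x\in B_R} d(\Phi_j x,\Phi x)<\varepsilon/3 .
\]
Because $\Phi_j$ is compact, $\Phi_j(B_R)$ is relatively compact, hence totally bounded. Therefore there are finitely many points $x_1,\dots,x_n\in B_R$ such that for every $x\in B_R$ some $k$ satisfies $d(\Phi_j x,\Phi_j x_k)<\varepsilon/3$. Taking the centres of the net to be images of points of $B_R$ is legitimate: a finite $\varepsilon/6$-net of the set $\Phi_j(B_R)$ can be replaced by nearby points of the set itself, at the cost of enlarging the radius to $\varepsilon/3$.

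The triangle inequality then gives, for every $x\in B_R$ and the corresponding $k$,
\[
d(\Phi x,\Phi x_k)\le d(\Phi x,\Phi_j x)+d(\Phi_j x,\Phi_j x_k)+d(\Phi_j x_k,\Phi x_k)<\varepsilon .
\]
Thus $\{\Phi x_k\}_{k=1}^n$ is a finite $\varepsilon$-net for $\Phi(B_R)$. Since $\varepsilon$ was arbitrary, $\Phi(B_R)$ is totally bounded, and completeness of $Y$ makes its closure compact.

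Note what the argument uses and what it does not. No linearity or continuity of $\Phi$ is needed; only the uniform approximation on each ball $B_R$ and the completeness of $Y$ enter. The quasi-norm on $X$ serves only to define the bounded sets $B_R$, and every bounded set lies in some $B_R$. This matters in the paper's applications, where $\Phi$ is nonlinear, such as the area operator composed with the root map of Lemma \ref{lem:root-map}, and $Y$ may be $L_{\i}^r$ with the metric $d_r$. For the same reason, if one prefers to work with sequences, a diagonal extraction over $j$ gives a Cauchy subsequence of $\{\Phi x_m\}$, and completeness yields its limit; this is an equivalent route.
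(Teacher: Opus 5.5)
Your proof is correct and follows the same route as the paper. You approximate $\Phi$ uniformly on $B_R$ by a single compact $\Phi_j$, transfer a finite net of the totally bounded set $\Phi_j(B_R)$ to $\Phi(B_R)$ via the triangle inequality, and use completeness of $Y$ to pass from total boundedness to relative compactness; the only cosmetic difference is that you take the net centres to be the points $\Phi x_k$ (costing a third $\varepsilon/3$ term), whereas the paper reuses the net of $\Phi_j(B_R)$ itself enlarged by $\eta/3$.
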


\begin{proof}
Let $B_R=\{x:\|x\|_X\le R\}$. Fix $\eta>0$. Choose $j$ such that
$d(\Phi_jx,\Phi x)<\eta/3$ on $B_R$. Since $\Phi_j(B_R)$ is relatively compact,
it is totally bounded and has a finite $\eta/3$-net. The same net, enlarged by the
$\eta/3$ approximation, gives a finite $\eta$-net for $\Phi(B_R)$. Thus
$\Phi(B_R)$ is totally bounded. The completeness of $Y$ gives the desired compactness.
\end{proof}

\section{A counterexample}\label{sec:counterexample}

The following conjecture concerning necessary conditions for the area operators $\mathbb{A}_{\mu,l}$ to be bounded on Hardy spaces of Dirichlet series was proposed in \cite{ChenWangYuan}.

\begin{conjecture}[{\cite[Conjecture 5.1]{ChenWangYuan}}]
\label{conj:CYW}
Let $0<p,l<\infty$, and let $\mu$ be a positive Borel measure on
$\mathbb C_0$.
\begin{enumerate}[label=\textup{(\alph*)}]
\item If
$\mathbb{A}_{\mu,l}:\mathscr H^p_0\to L_{\i}^p$ is bounded, then $\mu$
is a Carleson measure on $\mathbb C_{1/2}$.
\item If
$\mathbb{A}_{\mu,l}:\mathscr H^p\to L_{\i}^p$ is bounded, then $\mu$ is
an $H_{\i}^p(\mathbb C_{1/2})$-Carleson measure.
\end{enumerate}
\end{conjecture}

The main purpose of this section is to disprove the above conjecture by constructing a counterexample. To this end, we need the following lemma. Here and in the sequel, we write $\delta_s$ to denote the Dirac point mass at $s\in\mathbb{C}$.

\begin{lemma}\label{lem:separated}
Let $0<p,l<\infty$ and suppose that $s_k=\sigma_k+it_k\in\mathbb{C}_0$ such that the intervals
\[
 I_k=(t_k-\sigma_k,t_k+\sigma_k)
\]
are pairwise disjoint.  Put
\[
 \mu=\sum_{k\ge1}m_k\delta_{s_k},\qquad
 w_k=\int_{I_k}\frac{d\tau}{1+\tau^2},
 \qquad m_k>0.
\]
Then, for every $f\in\mathscr H^p$,
\begin{equation}\label{eq:separated-formula}
 \norm{\mathbb{A}_{\mu,l}f}_{L_{\i}^p}^p
 =\sum_{k\ge1}
   \left(\frac{m_k}{\sigma_k}\right)^{p/l}
   w_k\,\norm{T_{\sigma_k}f}_{\mathscr H^p}^p,
\end{equation}
where both sides are allowed to be infinite.  Consequently,
$\mathbb{A}_{\mu,l}:\mathscr H^p\to L_{\i}^p$ is bounded if and only if
\begin{equation}\label{eq:separated-sum}
 S_{p,l}(\mu):=
 \sum_{k\ge1}
 \left(\frac{m_k}{\sigma_k}\right)^{p/l}w_k<\infty,
\end{equation}
and in this case
\[
 \norm{\mathbb{A}_{\mu,l}}_{\mathscr H^p\to L_{\i}^p}^p
 =S_{p,l}(\mu).
\]
The condition \eqref{eq:separated-sum} is sufficient for boundedness on
$\mathscr H^p_0$ and is also necessary when
$\sup_k\sigma_k<\infty$.
\end{lemma}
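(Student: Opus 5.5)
The plan is to compute $\mathbb A_{\mu,l}f$ pointwise, using that the cones meet only one atom at a time. For $s_k=\sigma_k+it_k$ we have $s_k\in\Gamma_\tau$ exactly when $|t_k-\tau|<\sigma_k$, that is, when $\tau\in I_k$. Since the intervals $I_k$ are pairwise disjoint, each $\tau$ lies in at most one of them. Hence, for almost every $\chi$ and every $\tau$,
\[
 \mathbb A_{\mu,l}f(\chi,\tau)^p=\sum_{k\ge1}\mathbf 1_{I_k}(\tau)\left(\frac{m_k}{\sigma_k}\right)^{p/l}|f_\chi(s_k)|^p .
\]
Here $f_\chi$ is the almost-everywhere analytic representative from \cite[Theorem 5]{Bayart}, so its values at the points $s_k\in\mathbb C_0$ make sense.

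Next I integrate against $\frac{d\tau}{1+\tau^2}\,dm_\infty(\chi)$, using Tonelli since all terms are nonnegative. This gives
\[
 \norm{\mathbb{A}_{\mu,l}f}_{L_{\i}^p}^p=\sum_k\left(\frac{m_k}{\sigma_k}\right)^{p/l}w_k\int_{\bS^\infty}|f_\chi(s_k)|^p\,dm_\infty(\chi).
\]
The key identity needed is
\[
 \int_{\bS^\infty}|f_\chi(\sigma+it)|^p\,dm_\infty(\chi)=\norm{T_\sigma f}_{\mathscr H^p}^p,\qquad \sigma>0,\ t\in\mathbb R .
\]
For a Dirichlet polynomial $P$ this holds because $P_\chi(\sigma+it)=(T_\sigma P)_{\chi\cdot n^{-it}}(0)$, which is the Bohr lift of $T_\sigma P$ evaluated at a rotated point. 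Rotation invariance of $m_\infty$ then gives $\norm{\mathcal B T_\sigma P}_{L^p}^p$.

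To extend the identity to all $f\in\mathscr H^p$, I take polynomials $P_j\to f$ in $\mathscr H^p$. Then $T_\sigma P_j\to T_\sigma f$, because $T_\sigma$ is contractive. I also need $(P_j)_\chi(s)\to f_\chi(s)$ in $L^p(dm_\infty)$ for fixed $s\in\mathbb C_0$. This follows by applying the pointwise estimate $|g(s)|^p\lesssim_\sigma\|g\|_{H^p_\i(\mathbb C_0)}^p$ to $g=(P_j-f)_\chi$ and integrating via \eqref{eq:vertical-limit-disintegration}. This approximation step is the only technical point, and I expect it to be the main obstacle. It requires compatibility of the a.e. representatives of $f_\chi$ with the polynomial approximants, and the cited disintegration supplies exactly that. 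Together these steps give \eqref{eq:separated-formula}.

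The boundedness consequences then follow from the formula. Since $\norm{T_{\sigma_k}f}_{\mathscr H^p}\le\norm f_{\mathscr H^p}$, we get $\norm{\mathbb A_{\mu,l}f}^p\le S_{p,l}(\mu)\norm f^p$ on $\mathscr H^p$, and hence also on $\mathscr H^p_0$. Testing with $f=1$, for which $T_\sigma 1=1$, gives equality, so the norm equals $S_{p,l}(\mu)$ and finiteness of $S_{p,l}(\mu)$ is necessary on $\mathscr H^p$. On $\mathscr H^p_0$, assume $\sup_k\sigma_k\le M$ and test with $f=2^{-s}$. Then $\norm{T_{\sigma_k}f}_{\mathscr H^p}=2^{-\sigma_k}\ge2^{-M}$, so boundedness forces $S_{p,l}(\mu)\le 2^{pM}\norm{\mathbb A_{\mu,l}}^p<\infty$.
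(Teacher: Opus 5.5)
Your proposal is correct and follows the paper's own proof. The ingredients are the same: the equivalence $s_k\in\Gamma_\tau\iff\tau\in I_k$ with disjointness, Tonelli, the rotation-invariance identity $\int_{\bS^\infty}|f_\chi(\sigma+it)|^p\,dm_\infty(\chi)=\|T_\sigma f\|_{\mathscr H^p}^p$, contractivity of $T_\sigma$, and the test functions $1$ and $2^{-s}$. Your density argument for that identity fills in a step the paper attributes only to rotation invariance; note that the point-evaluation constant in $H^p_{\i}(\mathbb C_0)$ depends on $s$, that is on $t$ as well as $\sigma$, which is harmless for fixed $s$.
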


\begin{proof}
Fix $f\in\mathscr{H}^p$. Since $s_k=\sigma_k+it_k$, we have
\[
 s_k\in\Gamma_\tau
 \quad\Longleftrightarrow\quad
 |t_k-\tau|<\sigma_k
 \quad\Longleftrightarrow\quad
 \tau\in I_k.
\]
The intervals $I_k$ are pairwise disjoint, so for each fixed $\tau$ at most one atom $s_k$ lies in $\Gamma_\tau$. Combining this fact with Fubini's theorem gives that
\begin{align*}
\int_{\bS^{\infty}}\int_{\mathbb{R}}
  \mathbb{A}_{\mu,l}f(\chi,\tau)^p
  \frac{d\tau}{1+\tau^2}dm_{\infty}(\chi)
&=\sum_{k\ge1}\left(\frac{m_k}{\sigma_k}\right)^{p/l}w_k
  \int_{\bS^\infty}|f_\chi(\sigma_k+it_k)|^p\,dm_\infty(\chi)\\
&=\sum_{k\ge1}\left(\frac{m_k}{\sigma_k}\right)^{p/l}w_k
  \,\norm{T_{\sigma_k}f}_{\mathscr H^p}^p,
\end{align*}
where the last equality follows from the rotation invariance of the measure $m_{\infty}$. This establishes \eqref{eq:separated-formula}.

Since each $T_{\sigma_k}$ is contractive on $\mathscr{H}^p$,
\eqref{eq:separated-sum} implies boundedness of $\mathbb{A}_{\mu,l}$.  Conversely, testing
\eqref{eq:separated-formula} with $f\equiv1$ gives necessity and the
exact norm on $\mathscr H^p$.  On $\mathscr H^p_0$, sufficiency is
unchanged.  If $\sup_{k\geq1}\sigma_k<\infty$, testing with
$f(s)=2^{-s}$ gives
\[
 \norm{T_{\sigma_k}f}_{\mathscr H^p}^p
 =2^{-p\sigma_k}\ge2^{-p\sup_{k\geq1}\sigma_k},
\]
and hence necessity follows.
\end{proof}

We are now ready to construct an example that indicates both parts of Conjecture \ref{conj:CYW} are false for every
$0<p,l<\infty$, even when $\mu$ is a probability measure.

\begin{theorem}\label{thm:counterexample}
There is a probability measure $\mu$ on $\mathbb C_0$, which is neither an ordinary Carleson
measure nor an $H_{\i}^p$-Carleson measure on either
$\mathbb C_0$ or $\mathbb C_{1/2}$, such that for every $0<p,l<\infty$,
\[
 \mathbb{A}_{\mu,l}:\mathscr H^p\to L_{\i}^p
 \quad\text{and}\quad
 \mathbb{A}_{\mu,l}:\mathscr H^p_0\to L_{\i}^p
\]
are both bounded.
\end{theorem}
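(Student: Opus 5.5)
Take $\mu$ to be a discrete measure $\mu=\sum_k m_k\delta_{s_k}$ as in Lemma \ref{lem:separated}, with atoms arranged so that the intervals $I_k$ are pairwise disjoint. By that lemma, boundedness on $\mathscr H^p$, and therefore also on $\mathscr H^p_0$, reduces to the finiteness of
\[
S_{p,l}(\mu)=\sum_k (m_k/\sigma_k)^{p/l}w_k .
\]
This sum must be finite for every $0<p,l<\infty$ simultaneously. The simplest way to arrange that is to choose the atoms so that $m_k\le\sigma_k$ for all $k$ and $\sum_k w_k<\infty$. The latter is automatic, since $\sum_k w_k\le\int_{\mathbb R}\frac{d\tau}{1+\tau^2}=\pi$ by disjointness. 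Then $S_{p,l}(\mu)\le\pi$ for all $p,l$, and Lemma \ref{lem:separated} gives both boundedness statements at once.

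The Carleson conditions have to fail, and the idea is to put large mass high up in the half-plane but far out vertically. Choose $\sigma_k=1$ for all $k$ and $t_k=3k$, so the intervals $I_k=(3k-1,3k+1)$ are disjoint. Take $m_k=2^{-k}$, which gives a probability measure with $m_k\le\sigma_k$. This alone is still a Carleson measure, so the plan is modified to put the mass near the boundary instead. Keep $m_k=\sigma_k c_k$ with $c_k\le 1$, and let $\sigma_k\to0$ so that one atom sits inside a small square $Q(I)$ with $|I|\approx 2\sigma_k$ and $\mu(Q(I))/|I|\approx c_k/2$. Taking $c_k$ bounded below by $1/2$ already prevents $\mu$ from being vanishing Carleson, but not from being Carleson.

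To defeat the Carleson condition, let many atoms share one square of comparable height while their intervals $I_k$ stay disjoint. Place $N_j$ atoms at height $\sigma=\varepsilon_j$ with spacing $3\varepsilon_j$ inside a window of length $L_j=3N_j\varepsilon_j$, each of mass $m=\varepsilon_j$. This window is contained in a square $Q(I)$ with $|I|=L_j$ as soon as $\varepsilon_j\le L_j$, which is automatic. The total mass in that square is $N_j\varepsilon_j$, so the ratio to $|I|$ is only about $1/3$, and this is still Carleson. The weight on $\sigma_k$ in $S_{p,l}$ therefore has to be used: the bound $m_k\le\sigma_k$ is stronger than needed, because $w_k$ is small when $|t_k|$ is large. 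Concretely, $w_k\approx 2\sigma_k/t_k^2$, so the finiteness of
\[
\sum_k (m_k/\sigma_k)^{p/l}\sigma_k/t_k^2
\]
for all $p,l$ needs $m_k/\sigma_k$ to grow slower than any power of $t_k$, for instance logarithmically. Choose windows centered at $t\approx T_j$ with $T_j$ growing very fast (say $T_j=e^{j^2}$), heights $\sigma_k=1$, and masses $m_k=\lambda_j=j$, placed in a block of $N_j$ atoms with $N_j\lambda_j\gtrsim j\,N_j$. Then $\mu(Q(I))/|I|\approx \lambda_j/3\to\infty$, so $\mu$ is not Carleson on $\mathbb C_0$. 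Taking height $\sigma_k=1$, or $\ge 1$, keeps the atoms inside $\mathbb C_{1/2}$, so the failure also holds on $\mathbb C_{1/2}$.

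Next choose $N_j$ with $\sum_j N_j j^{p/l}/T_j^2<\infty$ for all $p,l$; for example $N_j=j$ works because $T_j$ is superpolynomial. The total mass $\sum_j jN_j$ is infinite, so the block masses are multiplied by a single global factor after first reweighting so that the total is finite. The cleaner route is to keep the ratio $m_k/\sigma_k=j$ but make the blocks sit at large heights $\sigma=\sigma_j$ with small total mass. Since the Carleson ratio is scale invariant, use blocks of $j$ atoms at height $\sigma_j=2^{-j}$ near $t=T_j$ with mass $j2^{-j}$ each. The total mass is then $\sum_j j^2 2^{-j}<\infty$, and after normalizing this gives a probability measure. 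These atoms lie near the boundary rather than in $\mathbb C_{1/2}$, so for $\mathbb C_{1/2}$ add a second family at height $1$ whose masses are $\lambda_j 2^{-j}$ times lengths, which destroys the Carleson condition relative to $\mathbb C_{1/2}$. Failure of the $H^p_{\mathrm i}$-Carleson property then follows because $H^p_{\mathrm i}$-Carleson measures on $\mathbb C_\theta$ are in particular ordinary Carleson measures on $\mathbb C_\theta$ locally; this is checked by testing with reproducing-kernel-type functions $(s-\theta+\overline{s_0})^{-2/p}$, normalized appropriately.

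The main obstacle is this balancing act: keep $m_k/\sigma_k$ subpolynomial in $|t_k|$ with $\sum_k w_k(m_k/\sigma_k)^{p/l}<\infty$ uniformly in all exponents, while still letting $\mu(Q(I))/|I|$ blow up on some squares and keeping $\mu$ finite. One also has to verify the $H^p_{\mathrm i}$ test-function argument on $\mathbb C_{1/2}$ for every $p$.
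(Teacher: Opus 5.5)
\textbf{There is a genuine gap in the half-plane $\mathbb C_{1/2}$.}

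Your $\mathbb C_0$ construction is sound. It is essentially the paper's second family: atoms near $\Re s=0$ with $m_k/\sigma_k\to\infty$, placed at heights $|t_k|$ so large that $w_k\lesssim\sigma_k/t_k^2$ absorbs every power of $m_k/\sigma_k$ in $S_{p,l}(\mu)$. The paper uses single atoms with ratio $2^{2k}$ at $-2^{k^2}$; your blocks of $j$ atoms are unnecessary but harmless.

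The failure is on $\mathbb C_{1/2}$. In your final, finite-mass measure, the only atoms lying in $\mathbb C_{1/2}$ are those of the second family at $\Re s=1$. A box $Q_{1/2}(I)$ can contain such an atom only if $|I|\ge 1/2$. Hence $\mu(Q_{1/2}(I))/|I|\le 2\mu(\mathbb C_0)=2$ for every $I$, whatever masses you give that family. So your measure \emph{is} a Carleson measure on $\mathbb C_{1/2}$. Your earlier version with masses $j$ at height $1$ did fail the condition, but only because its total mass was infinite. For a finite measure, the Carleson condition on $\mathbb C_{1/2}$ can only fail if mass accumulates at the line $\Re s=1/2$. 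Consequently your test-function route cannot produce the failure of the $H^p_{\i}(\mathbb C_{1/2})$-Carleson property either. The statement requires both failures.

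\textbf{The missing idea.} The Carleson condition on $\mathbb C_{1/2}$ measures distance to $\Re s=1/2$, while the ratio $m_k/\sigma_k$ in $S_{p,l}(\mu)$ measures distance to $\Re s=0$. So add atoms just to the right of $\Re s=1/2$, as the paper does: $\sigma_k=\frac12+2^{-3k}$, $t_k=2^k$, $m_k=2^{-k}$.

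\begin{itemize}
\item The box $Q_{1/2}(J_k)$, with $J_k$ centered at $t_k$ and $|J_k|=2^{-3k}$, contains the atom $\sigma_k+it_k$. This gives the ratio $2^{2k}\to\infty$.
\item Since $m_k/\sigma_k\le 2^{1-k}\le 1$, this family contributes at most $\sum_k w_k\le\pi$ to $S_{p,l}(\mu)$, for all $p,l$ at once.
\end{itemize}

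This family satisfies exactly your initial ``simplest'' condition $m_k\le\sigma_k$. You discarded that condition after checking only boxes based on $\Re s=0$.

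With this family added, and all intervals $I_k$ kept pairwise disjoint (for example, put the two families on opposite sides of the real axis, as the paper does), the rest of your argument goes through. Your reduction from $H^p_{\i}$-Carleson to ordinary Carleson is correct, and in fact holds globally, not just locally. The reason is that $\|f\|_{H^p_{\i}(\mathbb C_\theta)}^p\le\pi^{-1}\|f\|_{H^p(\mathbb C_\theta)}^p$. The standard kernels $(s+\overline{s_0}-2\theta)^{-2/p}$, $s_0\in\mathbb C_\theta$, then give the Carleson condition on $\mathbb C_\theta$. This replaces the paper's appeal to the weighted Carleson characterization from the earlier work.
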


\begin{proof}
For $k\ge2$, set
\[
\begin{array}{lll}
 \varepsilon_k=2^{-3k},&
 \sigma_k=\frac12+\varepsilon_k,&
 t_k=2^k,\\[2mm]
 \widetilde\sigma_k=2^{-3k},&
 \widetilde t_k=-2^{k^2},&
 m_k=\widetilde m_k=2^{-k}.
\end{array}
\]
Define
\begin{equation}\label{eq:counterexample-measure}
 \mu=\sum_{k=2}^\infty m_k\delta_{\sigma_k+it_k}
     +\sum_{k=2}^\infty
       \widetilde m_k\delta_{\widetilde\sigma_k+i\widetilde t_k}.
\end{equation}
The total mass of $\mu$ is
$2\sum_{k=2}^\infty2^{-k}=1$.  The intervals centered at $t_k$ and
$\widetilde t_k$ with respective radii $\sigma_k$ and
$\widetilde\sigma_k$ are pairwise disjoint.

For the first family,
\[
 w_k:=\int_{t_k-\sigma_k}^{t_k+\sigma_k}
          \frac{d\tau}{1+\tau^2}
 \le C\frac{\sigma_k}{1+t_k^2}\le C2^{-2k}.
\]
Since $\sigma_k\asymp1$,
\[
 \sum_{k\geq2}\left(\frac{m_k}{\sigma_k}\right)^{p/l}w_k
 \le C_{p,l}\sum_{k\geq2}2^{-kp/l-2k}<\infty.
\]
For the second family,
\[
 \widetilde w_k
 \le C\frac{\widetilde\sigma_k}{1+\widetilde t_k^2}
 \le C2^{-3k-2k^2},
\]
and hence
\[
 \sum_{k\geq2}\left(\frac{\widetilde m_k}{\widetilde\sigma_k}\right)^{p/l}
 \widetilde w_k
 \le C\sum_{k\geq2}2^{-2k^2+(2p/l-3)k}<\infty.
\]
Lemma \ref{lem:separated} therefore gives the asserted boundedness on both spaces.

We next verify failure of the Carleson measure conditions.  Let $J_k$
be the interval centered at $t_k$ of length $\varepsilon_k$.  Then
$\sigma_k+it_k\in Q_{1/2}(J_k)$ and
\[
 \frac{\mu(Q_{1/2}(J_k))}{|J_k|}
 \ge\frac{m_k}{\varepsilon_k}=2^{2k}\longrightarrow\infty.
\]
Thus $\mu$ is not a Carleson measure on $\mathbb C_{1/2}$.  Similarly,
if $\widetilde J_k$ is centered at $\widetilde t_k$ and has length
$\widetilde\sigma_k$, then
\[
 \frac{\mu(Q_0(\widetilde J_k))}{|\widetilde J_k|}
 \ge\frac{\widetilde m_k}{\widetilde\sigma_k}
 =2^{2k}\longrightarrow\infty,
\]
so $\mu$ is not a Carleson measure on $\mathbb C_0$.

Finally, the half-plane Carleson characterization used in
\cite[Lemma 3.3]{ChenWangYuan} says that if $\mu$ is an
$H_{\i}^p(\mathbb C_\theta)$-Carleson measure, then
\[
 d\nu_\theta(s)
 =\bigl(1+(\Ree s-\theta)^2+(\Imm s)^2\bigr)\,d\mu(s)
\]
is an  Carleson measure on $\mathbb C_\theta$.  For
$\theta=1/2$, the boxes $Q_{1/2}(J_k)$ give
\[
 \frac{\nu_{1/2}(Q_{1/2}(J_k))}{|J_k|}
 \ge t_k^2\frac{m_k}{\varepsilon_k}
 =2^{4k}\longrightarrow\infty.
\]
For $\theta=0$, the boxes $Q_0(\widetilde J_k)$ give
\[
 \frac{\nu_0(Q_0(\widetilde J_k))}{|\widetilde J_k|}
 \ge \widetilde t_k^{\,2}
     \frac{\widetilde m_k}{\widetilde\sigma_k}
 =2^{2k^2+2k}\longrightarrow\infty.
\]
The two $H_{\i}^p$-Carleson measure conditions therefore fail as well.
\end{proof}

\section{A characterization of boundedness and compactness for $\mathbb{A}_{\mu,p}$}
\label{sec:diagonal}

In this section, we are going to characterize the boundedness and compactness of the area operators $\mathbb{A}_{\mu,p}$ on the Hardy spaces $\mathscr{H}^p$ and $\mathscr{H}^p_0$. To this end, define
\[
 W(s)=\frac1\sigma
 \int_{t-\sigma}^{t+\sigma}\frac{d\tau}{1+\tau^2}
\]
for $s=\sigma+it\in\mathbb C_0$. Notice that $0<W(s)\le2$.  For fixed $0<p<\infty$ and a positive Borel measure $\mu$ on $\mathbb{C}_0$, we also set
\[
 c_n=c_n(\mu,p):=\int_{\mathbb C_0}W(s)n^{-p\Ree s}\,d\mu(s),
 \qquad n\ge1.
\]
The main result of this section reads as follows.

\begin{theorem}\label{thm:diagonal}
Let $0<p<\infty$ and let $\mu$ be a positive Borel measure on
$\mathbb C_0$.
\begin{enumerate}[label=\textup{(\arabic*)}]
\item The following assertions are equivalent:
\begin{enumerate}[label=\textup{(\roman*)}]
\item $\mathbb{A}_{\mu,p}:\mathscr H^p\to L_{\i}^p$ is bounded;
\item $\mathbb{A}_{\mu,p}:\mathscr H^p\to L_{\i}^p$ is compact;
\item $c_1<\infty$.
\end{enumerate}
When these conditions hold,
\[
 \norm{\mathbb{A}_{\mu,p}}_{\mathscr H^p\to L_{\i}^p}^{p}
 =c_1.
\]

\item The following assertions are equivalent:
\begin{enumerate}[label=\textup{(\roman*)}]
\item $\mathbb{A}_{\mu,p}:\mathscr H^p_0\to L_{\i}^p$ is bounded;
\item $\mathbb{A}_{\mu,p}:\mathscr H^p_0\to L_{\i}^p$ is compact;
\item $c_2<\infty$.
\end{enumerate}
When these conditions hold,
\[
 \norm{\mathbb{A}_{\mu,p}}_{\mathscr H^p_0\to L_{\i}^p}^{p}
 =c_2.
\]
\end{enumerate}
\end{theorem}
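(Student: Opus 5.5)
The plan is to compute $\|\mathbb A_{\mu,p}f\|_{L_{\i}^p}^p$ exactly, as in Lemma \ref{lem:separated}. Since $l=p$, the $p$th power of the area function is linear in $\mu$. By Tonelli,
\[
\|\mathbb A_{\mu,p}f\|_{L_{\i}^p}^p=\int_{\mathbb C_0}\int_{\bS^\infty}|f_\chi(s)|^p\,dm_\infty(\chi)\,\frac{1}{\Re s}\int_{|\tau-t|<\sigma}\frac{d\tau}{1+\tau^2}\,d\mu(s).
\]
By rotation invariance of $m_\infty$ (the vertical shift $t$ is absorbed by the character $n^{-it}$), the inner integral equals $\|T_{\sigma}f\|_{\mathscr H^p}^p$. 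Hence
\[
\|\mathbb A_{\mu,p}f\|_{L_{\i}^p}^p=\int_{\mathbb C_0}W(s)\,\|T_{\Re s}f\|_{\mathscr H^p}^p\,d\mu(s).
\]
One point needs care: the identity $\int|f_\chi(\sigma+it)|^p\,dm_\infty=\|T_\sigma f\|^p$ for the a.e.\ analytic representatives. I would prove it first for polynomials and then pass to all $f\in\mathscr H^p$ by density, using point evaluation on the Bohr side at the dilated point $(\mathfrak p_j^{-\sigma}z_j)$. That point lies in $\ell^2\cap\mathbb D^\infty$, where $|F|^p$ is controlled via \eqref{eq:point-evaluation}, or via measure-theoretic convergence of the vertical limits.

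Boundedness with the exact norm then follows. On $\mathscr H^p$, contractivity of $T_\sigma$ gives the bound $\le c_1\|f\|^p$, and testing $f\equiv1$ gives equality, so (i)$\Leftrightarrow$(iii) with norm$^p=c_1$. On $\mathscr H_0^p$, \eqref{eq:zero-translation-decay} gives $\|T_\sigma f\|^p\le 2^{-p\sigma}\|f\|^p$, which yields the bound $\le c_2\|f\|^p$. Testing $f=2^{-s}$, for which $\|T_\sigma f\|^p=2^{-p\sigma}$ exactly, gives equality.

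For compactness, (ii)$\Rightarrow$(i) is trivial. For (iii)$\Rightarrow$(ii), split $\mu=\mu_\varepsilon+\mu^\varepsilon$. The $p$th powers add, $(\mathbb A_{\mu,p}f)^p=(\mathbb A_{\mu_\varepsilon,p}f)^p+(\mathbb A_{\mu^\varepsilon,p}f)^p$, so
\[
\bigl|\mathbb A_{\mu,p}f-\mathbb A_{\mu^\varepsilon,p}f\bigr|^{p}\le C_p\,\mathbb A_{\mu_\varepsilon,p}f^{\,p}
\]
pointwise, using $|a^{1/p}-b^{1/p}|\lesssim (a-b)^{1/p}$ when $p\ge1$, with a Hölder-type variant otherwise. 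One could alternatively use Lemma \ref{lem:root-map} with the linear map $f\mapsto(\mathbb A f)^p$ into $L^1_{\i}$. Either way, the error is at most $C\,c_1(\mu_\varepsilon)\|f\|^p$, or $c_2(\mu_\varepsilon)$ on $\mathscr H^p_0$, and this tends to $0$ as $\varepsilon\to0$ by dominated convergence, since $c_1<\infty$ (respectively $c_2<\infty$).

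So by Lemma \ref{lem:uniform-compact-approx} it suffices to show that $\mathbb A_{\mu^\varepsilon,p}$ is compact. Write $\mathbb A_{\mu^\varepsilon,p}f=\mathbb A_{\nu,p}(T_{\varepsilon/2}f)$, where $\nu$ is $\mu^\varepsilon$ translated left by $\varepsilon/2$; then $\Re s\ge\varepsilon/2$ on the support of $\nu$. Here $\mathbb A_{\nu,p}$ is bounded and continuous. It is bounded because $W$ changes only by a bounded factor under the shift when $\sigma\ge\varepsilon$: the cone aperture differs, so one compares $\sigma$ to $\sigma-\varepsilon/2\ge\sigma/2$ via the formula with $\Gamma_\tau$ of comparable apertures. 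Alternatively, and more cleanly, apply the formula directly: $\|\mathbb A_{\mu^\varepsilon}f-\mathbb A_{\mu^\varepsilon}g\|^p$ is bounded by $\int W\|T_\sigma(f-g)\|^p\,d\mu^\varepsilon\le\int W\,\|T_{\varepsilon}(f-g)\|^p\,d\mu$, using $T_\sigma=T_{\sigma-\varepsilon}T_\varepsilon$ and contractivity (with the root-map adjustment when $p<1$). Hence if $f_k$ is bounded, Theorem \ref{thm:translation-compact} gives a subsequence along which $T_\varepsilon f_k$ is Cauchy, and therefore $\mathbb A_{\mu^\varepsilon,p}f_k$ is Cauchy in $L^p_{\i}$. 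The main obstacle is this Lipschitz-type control of the nonlinear operator by $\|T_\varepsilon(f-g)\|$. I would handle it through the pointwise Minkowski inequality $|\mathbb A f-\mathbb A g|\le\mathbb A(f-g)$ when $p\ge1$, and through $|\mathbb Af^p-\mathbb Ag^p|$ together with $|a|^p-|b|^p$ bounds when $p<1$.
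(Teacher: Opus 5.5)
Your architecture matches the paper's. You use the exact identity $\norm{\mathbb A_{\mu,p}f}_{L_{\i}^p}^p=\int W(s)\norm{T_{\Re s}f}_{\mathscr H^p}^p\,d\mu(s)$, test with $1$ and $2^{-s}$, and split $\mu=\mu_\varepsilon+\mu^\varepsilon$. The boundary part is small by dominated convergence, and the interior part is compact via compactness of a horizontal translation. Your Minkowski shortcut for $p\ge1$, in place of the paper's $L^1$ estimate for $(\mathbb A_{\mu^\varepsilon,p}f)^p$, is a harmless simplification.

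However, the interior step as written fails in part (2). You bound the interior difference by
\[
\int_{\sigma\ge\varepsilon}W(s)\norm{T_\sigma(f-g)}_{\mathscr H^p}^p\,d\mu(s)\le\norm{T_\varepsilon(f-g)}_{\mathscr H^p}^p\int_{\sigma\ge\varepsilon}W\,d\mu,
\]
using only contractivity of $T_{\sigma-\varepsilon}$. On $\mathscr H^p_0$ the hypothesis is only $c_2<\infty$, and this does not make $\int_{\sigma\ge\varepsilon}W\,d\mu$ finite. For example, take $\mu=\sum_{k\ge1}2^{pk/2}\delta_k$. Then $W(k)=2\arctan(k)/k$, so $c_2<\infty$, while $\int_{\sigma\ge\varepsilon}W\,d\mu=\infty$ for every $\varepsilon\le1$.

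The missing ingredient is one you already used for boundedness. Since $T_\varepsilon(f-g)\in\mathscr H^p_0$, \eqref{eq:zero-translation-decay} gives $\norm{T_\sigma(f-g)}_{\mathscr H^p}\le2^{-(\sigma-\varepsilon)}\norm{T_\varepsilon(f-g)}_{\mathscr H^p}$ for $\sigma\ge\varepsilon$. This yields the interior bound $2^{p\varepsilon}c_2\norm{T_\varepsilon(f-g)}_{\mathscr H^p}^p$, which is exactly how the paper argues (with $\varepsilon/2$ in place of $\varepsilon$). The same replacement is needed in your $p<1$ version, which runs through $(\mathbb A_{\mu^\varepsilon,p}f)^p$.

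There are two smaller points.

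First, the factorization $\mathbb A_{\mu^\varepsilon,p}f=\mathbb A_{\nu,p}(T_{\varepsilon/2}f)$ you initially propose is not an identity. Shifting the measure changes both the cone condition $|t-\tau|<\sigma$ and the weight $1/\Re s$; this is why the paper needs the auxiliary operator $B_\delta$ in Lemma \ref{lem:interior-compact}. You are right to abandon it for the direct estimate.

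Second, your justification of $\int_{\bS^\infty}|f_\chi(\sigma+it)|^p\,dm_\infty(\chi)=\norm{T_\sigma f}_{\mathscr H^p}^p$ via point evaluation at $(\mathfrak p_j^{-\sigma}z_j)_j$ only covers $\sigma>1/2$, since that sequence lies in $\ell^2$ only then. For all $\sigma>0$, pass from polynomials to $f$ using \eqref{eq:vertical-limit-disintegration} together with the boundedness of point evaluation at $\sigma+it$ on $H_{\i}^p(\mathbb C_0)$.
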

\begin{proof}
By the rotation invariance of the measure $m_{\infty}$, for any $f\in \mathscr{H}^p$ and $s=\sigma+it\in\mathbb{C}_0$,
\begin{equation}\label{eq:haar-translation-diagonal}
 \int_{\bS^\infty}|f_\chi(\sigma+it)|^p\,dm_\infty(\chi)
 =\norm{T_\sigma f}_{\mathscr H^p}^p.
\end{equation}
Combining this with Fubini's theorem yields for any $f\in\mathscr{H}^p$,
\begin{equation}\label{eq:diagonal-identity}
 \norm{\mathbb{A}_{\mu,p}f}_{L_{\i}^p}^p
 =\int_{\mathbb C_0}W(s)
   \norm{T_{\Ree s}f}_{\mathscr H^p}^p\,d\mu(s).
\end{equation}

(1) Consider first the full space $\mathscr H^p$. If $c_1<\infty$, then, since $T_\sigma$ is contractive on
$\mathscr H^p$ whenever $\sigma>0$, the equality \eqref{eq:diagonal-identity} implies that for any $f\in\mathscr{H}^p$,
\[
 \norm{\mathbb{A}_{\mu,p}f}_{L_{\i}^p}^p
 \le c_1\norm{f}_{\mathscr H^p}^p.
\]
If $\mathbb{A}_{\mu,p}$ is bounded on $\mathscr{H}^p$, then testing \eqref{eq:diagonal-identity} with the function $f\equiv1$ gives $c_1<\infty$ and the exact quasi-norm formula.

We next prove the compactness of $\mathbb{A}_{\mu,p}$ when $c_1<\infty$.  For a positive measure
$\nu$ write
\[
 \Phi_\nu(f):=(\mathbb{A}_{\nu,p}f)^p,\quad f\in\mathscr{H}^p.
\]
Thus $\Phi_\nu(f)$ is nonnegative and
\[
 \norm{\Phi_\nu(f)}_{L_{\i}^1}
 =\norm{\mathbb{A}_{\nu,p}f}_{L_{\i}^p}^p.
\]
For $\varepsilon>0$, write $\mu=\mu_\varepsilon+\mu^\varepsilon$.  Then
\[
 c_{1,\varepsilon}
 :=\int_{0<\sigma<\varepsilon}W(s)\,d\mu(s)
 \rightarrow0
\]
as $\varepsilon\to0$. Hence, for every $R>0$, the quasi-norm formula proved before gives that
\begin{equation}\label{eq:diagonal-boundary-small}
 \sup_{\|f\|_{\mathscr{H}^p}\leq R}\norm{\Phi_\mu(f)-\Phi_{\mu^\varepsilon}(f)}_{L_{\i}^1}
 =\sup_{\|f\|_{\mathscr{H}^p}\leq R}\norm{\Phi_{\mu_\varepsilon}(f)}_{L_{\i}^1}
 = R^pc_{1,\varepsilon}\to0\quad \text{as}\quad \varepsilon\to0.
\end{equation}

Fix $\varepsilon>0$.  We claim that
$\Phi_{\mu^\varepsilon}:\mathscr H^p\to L_{\i}^1$ is compact.
Let $\{f_n\}$ be a bounded sequence in $\mathscr H^p$.  By Theorem
\ref{thm:translation-compact}, after passing to a subsequence if necessary, we may
assume that $\{T_{\varepsilon/2}f_n\}$ converges in $\mathscr H^p$.  If
$0<p\le1$, then
\[
 \big||a|^p-|b|^p\big|\le|a-b|^p,\quad a,b\in\mathbb{C},
\]
and therefore, using \eqref{eq:haar-translation-diagonal} and the fact that
$T_\sigma=T_{\sigma-\varepsilon/2}T_{\varepsilon/2}$ for $\sigma\ge\varepsilon$, we obtain
\begin{equation}\label{eq:diagonal-interior-small-p}
 \norm{\Phi_{\mu^\varepsilon}(f_n)
       -\Phi_{\mu^\varepsilon}(f_m)}_{L_{\i}^1}
 \le c_1\norm{T_{\varepsilon/2}(f_n-f_m)}_{\mathscr H^p}^p.
\end{equation}
If $p>1$, the inequality
\[
 \big||a|^p-|b|^p\big|
 \le p|a-b|(|a|+|b|)^{p-1}, \quad a,b\in\mathbb{C}
\]
together with H\"older's inequality in the character variable gives
\begin{align}
 \norm{\Phi_{\mu^\varepsilon}(f_n)
       -\Phi_{\mu^\varepsilon}(f_m)}_{L_{\i}^1}\le C_p c_1
 \norm{T_{\varepsilon/2}(f_n-f_m)}_{\mathscr H^p}
 \left(\norm{T_{\varepsilon/2} f_n}_{\mathscr H^p}
     +\norm{T_{\varepsilon/2} f_m}_{\mathscr H^p}\right)^{p-1}.
 \label{eq:diagonal-interior-large-p}
\end{align}
Thus $\{\Phi_{\mu^\varepsilon}(f_n)\}$ has a convergent subsequence in
$L_{\i}^1$ and the claim is proved. Consequently, by \eqref{eq:diagonal-boundary-small} and Lemma
\ref{lem:uniform-compact-approx}, $\Phi_\mu$ is compact from $\mathscr H^p$
into $L_{\i,+}^1$. Lemma \ref{lem:root-map}, applied with $l=p$, shows that
\[
 R_p:L_{\i,+}^1\to L_{\i,+}^p,
 \qquad R_p(U)=U^{1/p},
\]
is continuous.  Since $\mathbb{A}_{\mu,p}=R_p\circ\Phi_\mu$, the desired compactness follows.

Conversely, a compact positively homogeneous map sends the unit ball to
a bounded set, so compactness implies boundedness.  This proves part
(1).

(2) We now turn to $\mathscr H^p_0$. If $c_2<\infty$, then by \eqref{eq:zero-translation-decay}
and \eqref{eq:diagonal-identity},
\[
 \norm{\mathbb{A}_{\mu,p}f}_{L_{\i}^p}^p
 \le c_2\norm f_{\mathscr H^p}^p,
 \qquad f\in\mathscr H^p_0.
\]
For $f(s)=2^{-s}$, one has $\norm f_{\mathscr H^p}=1$ and
$\norm{T_\sigma f}_{\mathscr H^p}=2^{-\sigma}$, so testing \eqref{eq:diagonal-identity} with this
function gives necessity and the exact quasi-norm formula.

It remains to prove compactness when $c_2<\infty$.  Split the
measure as above.  Since
\[
 \int_{0<\sigma<\varepsilon}W(s)2^{-p\sigma}\,d\mu(s)
 \rightarrow0 \quad \text{as} \quad \varepsilon\to0,
\]
the boundary $p$-power map is uniformly small on bounded subsets of
$\mathscr H^p_0$.  For the interior part, applying \eqref{eq:zero-translation-decay} to
$T_{\varepsilon/2}h$ gives, for $\sigma\ge\varepsilon$,
\[
 \norm{T_\sigma h}_{\mathscr H^p}
 \le2^{-(\sigma-\varepsilon/2)}
 \norm{T_{\varepsilon/2}h}_{\mathscr H^p}.
\]
Hence, if $0<p\le1$,
\[
 \norm{\Phi_{\mu^\varepsilon}(f)
       -\Phi_{\mu^\varepsilon}(g)}_{L_{\i}^1}
 \le2^{p\varepsilon/2}c_2
      \norm{T_{\varepsilon/2}(f-g)}_{\mathscr H^p}^p,
\]
while for $p>1$,
\begin{align*}
 \norm{\Phi_{\mu^\varepsilon}(f)
       -\Phi_{\mu^\varepsilon}(g)}_{L_{\i}^1}\le C_p2^{p\varepsilon/2}c_2
 \norm{T_{\varepsilon/2}(f-g)}_{\mathscr H^p}
 \left(\norm{T_{\varepsilon/2} f}_{\mathscr H^p}
     +\norm{T_{\varepsilon/2} g}_{\mathscr H^p}\right)^{p-1}.
\end{align*}
The compactness of $T_{\varepsilon/2}$ now proves compactness of every interior
$p$-power map. Lemma \ref{lem:uniform-compact-approx}, applied to the boundary
approximation above, and then Lemma \ref{lem:root-map}, prove compactness of
$\mathbb A_{\mu,p}$ on $\mathscr H^p_0$. Part (2) follows.
\end{proof}

\begin{remark}\label{rem:bounded-continuous}
Although the area operator $\mathbb{A}_{\mu,l}$ is nonlinear, we have that its boundedness and continuity from $\mathscr{H}^p$ (or $\mathscr{H}^p_0$) to $L^p_{\i}$ are equivalent for all $0<p,l<\infty$.
In fact, since
\[
 \mathbb{A}_{\mu,l}(\lambda f)=|\lambda|\mathbb{A}_{\mu,l}f,\quad \lambda\in\mathbb{C},\quad f\in\mathscr{H}^p,
\]
the continuity of $\mathbb{A}_{\mu,l}$ at the origin implies its boundedness.  Conversely, suppose $\mathbb{A}_{\mu,l}$ is bounded, i.e.
\[
 \norm{\mathbb{A}_{\mu,l}f}_{L_{\i}^p}\le C\norm f_{\mathscr{H}^p},\quad f\in\mathscr{H}^p.
\]
If $l\ge1$, Minkowski's inequality gives
\[
 |\mathbb{A}_{\mu,l}f-\mathbb{A}_{\mu,l}g|\le \mathbb{A}_{\mu,l}(f-g),
\]
and hence $\mathbb{A}_{\mu,l}$ is Lipschitz continuous.  If $0<l<1$, set
\[
 \Phi_{\mu,l}(f)=(\mathbb{A}_{\mu,l}f)^l.
\]
The scalar inequality
\[
 \big||a|^l-|b|^l\big|\le|a-b|^l,\quad a,b\in\mathbb{C}
\]
gives
\[
 \norm{\Phi_{\mu,l}(f)-\Phi_{\mu,l}(g)}_{L_{\i}^{p/l}}
 \le C^l\norm{f-g}_X^l.
\]
Lemma \ref{lem:root-map} then yields the continuity of
$\mathbb{A}_{\mu,l}=R_l\circ\Phi_{\mu,l}$.  Thus the boundedness of $\mathbb{A}_{\mu,l}$ is equivalent to its continuity.  In particular, Theorem \ref{thm:diagonal} shows that when $l=p$, boundedness, continuity and compactness are equivalent.
\end{remark}

\begin{corollary}\label{cor:diagonal-counterexample-compact}
For the probability measure $\mu$ in Theorem \ref{thm:counterexample}
and every $0<p<\infty$, both
\[
 \mathbb{A}_{\mu,p}:\mathscr H^p\to L_{\i}^p
 \quad\text{and}\quad
 \mathbb{A}_{\mu,p}:\mathscr H^p_0\to L_{\i}^p
\]
are compact.  Nevertheless, $\mu$ is neither an ordinary Carleson
measure nor an $H_{\i}^p$-Carleson measure on either
$\mathbb C_0$ or $\mathbb C_{1/2}$.
\end{corollary}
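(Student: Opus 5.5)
The corollary follows by combining Theorem \ref{thm:diagonal} with the computations already made in the proof of Theorem \ref{thm:counterexample}. By Theorem \ref{thm:diagonal}, compactness of $\mathbb{A}_{\mu,p}$ on $\mathscr H^p$ is equivalent to $c_1<\infty$. Compactness on $\mathscr H^p_0$ is equivalent to $c_2<\infty$. Since $n^{-p\Ree s}\le 1$, we have $c_2\le c_1$, so it suffices to check $c_1<\infty$. (Alternatively, boundedness of $\mathbb{A}_{\mu,p}$ on both spaces was already established in Theorem \ref{thm:counterexample} with $l=p$, and Theorem \ref{thm:diagonal} upgrades boundedness to compactness directly.)

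To verify $c_1<\infty$ explicitly, write $\mu$ as in \eqref{eq:counterexample-measure}. Then
\[
 c_1=\sum_{k\ge2}m_kW(\sigma_k+it_k)+\sum_{k\ge2}\widetilde m_kW(\widetilde\sigma_k+i\widetilde t_k)
 =\sum_{k\ge2}\frac{m_k}{\sigma_k}w_k+\sum_{k\ge2}\frac{\widetilde m_k}{\widetilde\sigma_k}\widetilde w_k,
\]
which is exactly $S_{p,p}(\mu)$ from Lemma \ref{lem:separated}, i.e.\ the case $l=p$. The estimates in the proof of Theorem \ref{thm:counterexample} bound the first sum by $C\sum_k 2^{-k-2k}$. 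They bound the second by $C\sum_k 2^{-2k^2-k}$. Both are finite. Even more simply, $W\le 2$ gives $c_1\le 2\mu(\mathbb C_0)=2$. Hence both operators are compact for every $0<p<\infty$.

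The failure of the Carleson conditions is independent of $p$ and $l$. It is exactly what was shown in the last part of the proof of Theorem \ref{thm:counterexample}, using the boxes $Q_{1/2}(J_k)$, $Q_0(\widetilde J_k)$ and the weighted measures $\nu_\theta$. So that part is simply quoted.

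There is no real obstacle here. The only point requiring care is to note that Theorem \ref{thm:diagonal} applies to an arbitrary positive Borel measure with $c_1<\infty$ (respectively $c_2<\infty$). This holds trivially for any finite measure, because $W\le 2$.
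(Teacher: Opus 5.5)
Your proposal is correct and matches the paper's proof. Compactness comes from Theorem \ref{thm:diagonal}: the paper reaches it through the boundedness already established in Theorem \ref{thm:counterexample}, which is your parenthetical route, while your direct check $c_2\le c_1\le 2\mu(\mathbb C_0)=2$ uses condition (iii), as in Corollary \ref{cor:Hip-carleson-compact}; the failure of the Carleson conditions is quoted verbatim from Theorem \ref{thm:counterexample}.
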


\begin{proof}
Theorem \ref{thm:counterexample} gives boundedness of both diagonal area
operators for every $p>0$.  Compactness follows from Theorem
\ref{thm:diagonal}.  The failures of all the stated Carleson measure conditions
were proved in Theorem \ref{thm:counterexample}.
\end{proof}

\begin{corollary}\label{cor:Hip-carleson-compact}
Let $0<p<\infty$.  If $\mu$ is an
$H_{\i}^p(\mathbb C_0)$-Carleson measure, then
\[
 \mathbb{A}_{\mu,p}:\mathscr H^p\to L_{\i}^p
\]
is compact; consequently its restriction to $\mathscr H^p_0$ is also
compact.
\end{corollary}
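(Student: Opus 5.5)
By Theorem \ref{thm:diagonal}(1), it suffices to check that $c_1=\int_{\mathbb C_0}W(s)\,d\mu(s)<\infty$. Boundedness and compactness of $\mathbb{A}_{\mu,p}$ on $\mathscr H^p$ are equivalent to this condition. Compactness of the restriction to the closed subspace $\mathscr H^p_0$ is then immediate, because restricting a compact map to a closed subspace keeps it compact. Alternatively, one can invoke Theorem \ref{thm:diagonal}(2) and use $c_2\le c_1$.

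The most direct route is Theorem \ref{thm:CYW}(b). If $\mu$ is an $H_{\i}^p(\mathbb C_0)$-Carleson measure, then $\mathbb{A}_{\mu,p}:\mathscr H^p\to L_{\i}^p$ is bounded, with norm at most $C_{p,p}\|I_\mu\|$. Theorem \ref{thm:diagonal}(1) then upgrades boundedness to compactness, and the proof is finished.

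For a self-contained argument, one can bound $c_1$ directly by testing the embedding on a suitable function. Take $F(s)=2/(s+1)$, which lies in $H_{\i}^p(\mathbb C_0)$ since $|F(\sigma+it)|^p\lesssim (1+t^2)^{-p/2}$ uniformly in $\sigma>0$. Then
\[
\int_{\mathbb C_0}\frac{d\mu(s)}{|s+1|^p}\lesssim\|I_\mu\|^p.
\]
This shows that $\mu$ is finite with a decay weight of order $|s+1|^{-p}$. However, this weight is not comparable to $W(s)\asymp \min\{1,\,1/(\sigma(1+\sigma^2+t^2))\}\cdot\ldots$, which is of order roughly $1/(1+|s|^2)$ for $\sigma\lesssim 1+|t|$. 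When $p<2$ the test function gives too weak a weight.

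The better test is $F_\epsilon(s)=\bigl(2/(s+1)\bigr)^{2/p}$, whose power $|F_\epsilon|^p=4/|s+1|^2$. Its $H_{\i}^p$-norm is finite, because $\int_{\mathbb R}(1+t^2)^{-1}\,dt/(1+t^2)<\infty$ uniformly in $\sigma$. This gives
\[
\int_{\mathbb C_0}\frac{d\mu(s)}{|s+1|^2}<\infty.
\]
It remains to compare $W(s)$ with $|s+1|^{-2}$. For $\sigma\le 1+|t|$, the interval $(t-\sigma,t+\sigma)$ has $1+\tau^2\gtrsim 1+t^2$ on most of it; more precisely, $W(s)\le 2\sup_{|\tau-t|<\sigma}(1+\tau^2)^{-1}$. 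For large $\sigma$, $W(s)\le \pi/\sigma\lesssim$? This last bound is weaker than $|s|^{-2}$, and that is the main obstacle. In the region $\sigma\gg|t|$ one only gets $W\asymp 1/\sigma$.

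To handle this region, the plan is to use the point-evaluation structure. One option is to replace $F$ by a function adapted to large $\sigma$, such as $(s+1)^{-1/p}$, which is also in $H_{\i}^p$ with $|F|^p=|s+1|^{-1}\asymp 1/\sigma$ on the region $\sigma\gtrsim|t|$. For $\sigma\le 2(1+|t|)$, one checks $W(s)\lesssim 1/|s+1|^2$ by splitting according to whether $|t|\ge 2\sigma$ or not. For $\sigma>2(1+|t|)$, one uses $W(s)\lesssim 1/\sigma\asymp |s+1|^{-1}$. Summing the two resulting estimates gives $c_1\lesssim\|I_\mu\|^p<\infty$. Theorem \ref{thm:diagonal}(1) then concludes the proof. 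In either route, the final step rests on Theorem \ref{thm:diagonal}.
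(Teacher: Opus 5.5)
Your first route is correct and complete. Theorem \ref{thm:CYW}(b) with $l=p$ gives boundedness of $\mathbb{A}_{\mu,p}$ on $\mathscr H^p$, Theorem \ref{thm:diagonal}(1) upgrades boundedness to compactness, and restriction to the closed subspace $\mathscr H^p_0$ (or $c_2\le c_1$) gives the last claim. The paper argues more elementarily. The constant function $1$ has $\|1\|_{H_{\i}^p(\mathbb C_0)}=1$, so testing the embedding gives $\mu(\mathbb C_0)\le\|I_\mu\|^p$. Since $0<W\le 2$, this gives $c_1\le 2\mu(\mathbb C_0)<\infty$ directly. Your route imports the nontrivial boundedness theorem from the earlier paper. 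The paper's route makes visible that only finiteness of $\mu$ is used, so the conclusion holds for every finite measure.

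Your ``self-contained'' detour is unnecessary, and as written it contains errors.

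\begin{itemize}
\item Testing with $2/(s+1)$ does not show that $\mu$ is finite; it only controls $\int|s+1|^{-p}\,d\mu$.
\item The claim that $W(s)\lesssim|s+1|^{-2}$ whenever $\sigma\le 2(1+|t|)$ is false. For $s=T+\i T$ with $T$ large, $W(s)=\arctan(2T)/T\approx\pi/(2T)$, while $|s+1|^{-2}\approx 1/(2T^2)$.
\end{itemize}

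The correct split is $|t|\ge2\sigma$ versus $|t|<2\sigma$.

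\begin{itemize}
\item If $|t|\ge 2\sigma$, then $|\tau|\ge|t|/2$ on $(t-\sigma,t+\sigma)$. Hence $W(s)\lesssim(1+t^2)^{-1}\lesssim|s+1|^{-2}$.
\item If $|t|<2\sigma$, then $W(s)\le\min\{2,\pi/\sigma\}\lesssim|s+1|^{-1}$.
\end{itemize}

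Since $|s+1|\ge1$, this gives $W\lesssim|s+1|^{-1}$ on all of $\mathbb C_0$. The single test function $(s+1)^{-1/p}$, which lies in $H_{\i}^p(\mathbb C_0)$, would then suffice. The constant test function $1$ is simpler still, and that is exactly the paper's proof.
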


\begin{proof}
Testing the bounded embedding
$H_{\i}^p(\mathbb C_0)\hookrightarrow L^p(\mu)$ with the constant
function $1$ gives $\mu(\mathbb C_0)<\infty$.  Since $0<W\le2$,
\[
 \int_{\mathbb C_0}W(s)\,d\mu(s)<\infty,
\]
and Theorem \ref{thm:diagonal} applies.
\end{proof}

\begin{remark}
Corollary \ref{cor:Hip-carleson-compact} requires only boundedness, not
compactness, of the $H_{\i}^p$ embedding.  In contrast,
Corollary \ref{cor:diagonal-counterexample-compact} shows that the Carleson measure condition is not necessary even for the compactness of area operators $\mathbb{A}_{\mu,l}$ in general.
\end{remark}

\section{Sufficient conditions for compactness: general exponents}
\label{sec:compactness}

In this section, we are going to establish the little-oh version of Theorem \ref{thm:CYW} to obtain some sufficient conditions for the area operators $\mathbb{A}_{\mu,l}$ to be compact on $\mathscr{H}^p$ or $\mathscr{H}^p_0$. We will need the following lemma, which may be well-known to experts. Nevertheless, we include
a proof here for completeness.

\begin{lemma}\label{lem:vanishing-small}
Let $\mu$ be a vanishing Carleson measure on $\mathbb C_0$. Then
\[
 \norm{\mu_\varepsilon}_{CM(\mathbb{C}_0)}\to0
 \quad \text{as}\quad \varepsilon\to0^+.
\]
\end{lemma}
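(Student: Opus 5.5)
Fix $\eta>0$. By the vanishing Carleson condition, choose $h>0$ such that $\mu(Q(I))\le\eta|I|$ for every interval $I$ with $|I|\le h$. We will show that $\norm{\mu_\varepsilon}_{CM(\mathbb C_0)}\le 2\eta$ (or a fixed multiple of $\eta$) for all $\varepsilon$ small enough. We split into two cases according to the length of the interval $I$ in the supremum defining $\norm{\mu_\varepsilon}_{CM(\mathbb C_0)}$.

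\emph{Case $|I|\le h$.} Since $\mu_\varepsilon\le\mu$, we have $\mu_\varepsilon(Q(I))\le\mu(Q(I))\le\eta|I|$ directly, for every $\varepsilon$.

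\emph{Case $|I|>h$.} Here $\mu_\varepsilon(Q(I))=\mu(Q(I)\cap\{0<\sigma<\varepsilon\})$. Take $\varepsilon\le h$. Cover $I$ by $N=\lceil |I|/\varepsilon\rceil\le 2|I|/\varepsilon$ consecutive intervals $J_1,\dots,J_N$ of length $\varepsilon$. The strip $\{0<\sigma<\varepsilon,\ t\in I\}$ is contained in $\bigcup_j Q(J_j)$. Since each $J_j$ satisfies $|J_j|=\varepsilon\le h$, we get
\[
\mu_\varepsilon(Q(I))\le\sum_{j=1}^N\mu(Q(J_j))\le N\eta\varepsilon\le 2\eta|I|,
\]
using $N\varepsilon\le|I|+\varepsilon<2|I|$ because $\varepsilon\le h<|I|$.

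Combining the two cases gives $\norm{\mu_\varepsilon}_{CM(\mathbb C_0)}\le2\eta$ for all $0<\varepsilon\le h$, which proves the lemma. The only point needing care is the covering step: the boxes $Q(J_j)$ must reach height $\varepsilon$, which is why we choose intervals of length exactly $\varepsilon$. The factor $2$ absorbs the rounding in $N$. No genuine obstacle is expected.
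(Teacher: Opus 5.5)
Your proof is correct and follows the paper's argument. Short intervals are controlled directly by the vanishing condition, and long intervals are covered by intervals of length exactly $\varepsilon$ whose Carleson boxes contain the strip $\{0<\sigma<\varepsilon\}$ above $I$; the only differences are bookkeeping (you split at a fixed threshold $h$ with $\varepsilon\le h$ rather than at $\varepsilon$ itself, and your one-sided covering gives the constant $2$ instead of the paper's $3$).
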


\begin{proof}
Set
\[
 \eta(\varepsilon)
 =\sup_{|I|\le\varepsilon}\frac{\mu(Q(I))}{|I|}.
\]
Then $\eta(\varepsilon)\to0$.  Let $I$ be a finite interval.  If
$|I|\le\varepsilon$, then
\[
 \mu_\varepsilon(Q(I))\le\eta(\varepsilon)|I|.
\]
If $|I|>\varepsilon$, cover $I$ by intervals $J_j$ of length exactly
$\varepsilon$, allowing the two end intervals to extend beyond $I$.
They can be chosen so that
\[
 \sum_j|J_j|\le3|I|.
\]
Because $\mu_\varepsilon$ is supported in $0<\sigma<\varepsilon$,
\[
 Q(I)\cap\{0<\sigma<\varepsilon\}
 \subset\bigcup_jQ(J_j).
\]
It follows that
\[
 \mu_\varepsilon(Q(I))
 \le\sum_j\mu(Q(J_j))
 \le3\eta(\varepsilon)|I|.
\]
Dividing both sides by $|I|$ and taking the supremum over $I$ completes the proof.
\end{proof}

The next lemma contains the compactness argument for the part of a measure
supported away from the boundary.

\begin{lemma}\label{lem:interior-compact}
Let $0<p,l<\infty$, $X$ denote either $\mathscr H^p$ or
$\mathscr H^p_0$, and let $\lambda$ be a positive Borel measure supported in
$\{\sigma\ge 2\delta\}$ for some $\delta\in(0,1)$. Define the translated measure $\nu$ by
\[
 \nu(E):=\lambda(E+\delta),
\]
where $E+\delta:=\{w+\delta:w\in E\}$ for every Borel set $E\subset\mathbb{C}_0$.
If
$
 \mathbb A_{\nu,l}:X\to L_{\i}^p
$
is bounded, then the map
$
 \Phi_\lambda(f):=\bigl(\mathbb A_{\lambda,l}f\bigr)^l
$
is compact from $X$ into $L_{\i,+}^{p/l}$. Consequently,
$\mathbb A_{\lambda,l}:X\to L_{\i}^p$ is compact.
\end{lemma}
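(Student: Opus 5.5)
The idea is to write $\mathbb A_{\lambda,l}f$ in terms of $\mathbb A_{\nu,l}$ applied to the translate $T_\delta f$, and then use the compactness of $T_\delta$ from Theorem \ref{thm:translation-compact}.

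\emph{Change of variables.} Since $\nu(E)=\lambda(E+\delta)$, we have $\int g\,d\nu=\int g(w-\delta)\,d\lambda(w)$ for nonnegative Borel $g$. Because $\lambda$ lives on $\{\sigma\ge2\delta\}$, $\nu$ lives on $\{\sigma\ge\delta\}\subset\mathbb C_0$. Moreover $(T_\delta f)_\chi(s)=f_\chi(s+\delta)$ for almost every $\chi$. A point $w=s+\delta$ with $\Re s\ge\delta$ satisfies $\Re s\ge \Re w/2$, so $\Re w/\Re s\le 2$. The cones also behave well: if $s\in\Gamma_\tau$ then $w\in\Gamma_\tau$. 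Conversely, if $w\in\Gamma_\tau$ then $s\in\Gamma_\tau(2)$, since $|t-\tau|<\Re w\le 2\Re s$.

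These facts give the pointwise bound
\[
 \Phi_\lambda(f)(\chi,\tau)\le 2\int_{\Gamma_\tau(2)}|(T_\delta f)_\chi(s)|^l\frac{d\nu(s)}{\Re s}.
\]
The right-hand side is an area operator with the wider aperture $2$. One has to pass back to aperture $1$. I expect the standard change-of-aperture estimate in $L^{p/l}_{\i}$ for such cone integrals to do this. Alternatively one can use a cover of $\Gamma_\tau(2)$ by finitely many translates $\Gamma_{\tau+j\Re s/2}$; in the weighted $d\tau/(1+\tau^2)$ setting this requires a mild argument. If that becomes messy, a cleaner route is to define $\nu$ with a slightly smaller shift: cone inclusion still holds, and the aperture loss is absorbed because $\lambda$ sits at height $\ge2\delta$. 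Either way one gets, for a constant $C$,
\[
 \|\Phi_\lambda(f)-\Phi_\lambda(g)\|\lesssim \text{(expression in } T_\delta f, T_\delta g\text{ via }\nu).
\]

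\emph{Continuity estimate.} This is the main step. One needs a continuity estimate, not just a bound: from a bounded sequence $\{f_n\}$ in $X$, Theorem \ref{thm:translation-compact} gives a subsequence with $T_\delta f_n$ convergent in $X$. One then needs $\Phi_\lambda(f_n)$ to be Cauchy in $L^{p/l}_{\i}$.

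For $l\le1$, use $\big||a|^l-|b|^l\big|\le|a-b|^l$ pointwise inside the integral. This gives $|\Phi_\lambda(f_n)-\Phi_\lambda(f_m)|\lesssim\Phi_{\tilde\nu}(T_\delta(f_n-f_m))$, where $\tilde\nu$ is the aperture-adjusted version of $\nu$. Boundedness of $\mathbb A_{\nu,l}$ then bounds the $L^{p/l}_{\i}$ norm by $C\|T_\delta(f_n-f_m)\|_X^l\to0$.

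For $l>1$, work with $\mathbb A_{\lambda,l}$ directly. Minkowski gives $|\mathbb A_{\lambda,l}f-\mathbb A_{\lambda,l}g|\le\mathbb A_{\lambda,l}(f-g)\lesssim \mathbb A_{\tilde\nu,l}(T_\delta(f-g))$, so $\{\mathbb A_{\lambda,l}f_n\}$ is Cauchy in $L^p_{\i}$. To go from $\mathbb A_{\lambda,l}$ to $\Phi_\lambda$, apply $U\mapsto U^l$, which is continuous $L^p_{\i,+}\to L^{p/l}_{\i,+}$ by the same H\"older argument as in Lemma \ref{lem:root-map}.

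\emph{Conclusion.} In both cases the image of a bounded set is relatively compact, so $\Phi_\lambda$ is compact into $L^{p/l}_{\i,+}$. Composing with $R_l$ from Lemma \ref{lem:root-map} gives compactness of $\mathbb A_{\lambda,l}$ into $L^p_{\i}$.

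The step I expect to be delicate is the cone comparison. It must yield a domination by an operator whose boundedness follows from the assumed boundedness of $\mathbb A_{\nu,l}$, and I would handle it through the aperture-change estimate described above.
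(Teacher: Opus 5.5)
Your architecture matches the paper's: factor through $T_\delta$, prove a continuity estimate, then apply Theorem \ref{thm:translation-compact} and Lemma \ref{lem:root-map}. The gap is the cone comparison, the one non-formal step, which fails as you set it up. Relaxing $|t-\tau|<\Re s+\delta$ to $|t-\tau|<2\Re s$ throws away exactly what is needed, and for $X=\mathscr H^p$ no change-of-aperture estimate can recover it. The weight $d\tau/(1+\tau^2)$ is not doubling, and $\nu$ may carry mass at arbitrarily large heights. There the aperture-$2$ window $(t-2\Re s,t+2\Re s)$ can have weighted measure unboundedly larger than the aperture-$1$ window. Concretely, take $l=p$, $X=\mathscr H^p$, $\delta=1/2$ and $\nu=\sum_{k\ge1}2^k\delta_{2^k+\i 2^{k+1}}$ (Dirac masses), so that $\lambda$ lives in $\{\sigma\ge5/2\}$. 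By Theorem \ref{thm:diagonal},
\[
\norm{\mathbb A_{\nu,p}}^p=c_1(\nu)\le\sum_k 2^k\cdot2^{-k}\int_{2^k}^\infty\tau^{-2}\,d\tau=1 .
\]
However, for $f\equiv1$ your majorant $2\int_{\Gamma_\tau(2)}|(T_\delta f)_\chi(s)|^p\,\frac{d\nu(s)}{\Re s}$ has $L_{\i}^{1}$-norm $2\sum_k\arctan(2^{k+2})=\infty$. Your fallbacks do not help. Covering $\Gamma_\tau(2)$ by $\Gamma_{\tau+j\Re s/2}$ uses shifts of size $\Re s$, which is exactly where the weight comparison breaks (same example). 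You also cannot redefine $\nu$ with a smaller shift, since the hypothesis concerns this particular $\nu$; moreover, for any positive shift, $w\in\Gamma_\tau$ still does not place $w$ minus the shift in $\Gamma_\tau$.

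The missing observation is that the excess width is the \emph{constant} $\delta$, and $\delta\le\Re s$ on the support of $\nu$. Hence $s+\delta\in\Gamma_\tau$ implies $s\in\Gamma_{\tau+\eta}$ for some $\eta\in\{-\delta,0,\delta\}$. Together with $1/(\Re s+\delta)\le1/\Re s$ this gives
\[
 \Phi_\lambda(f)(\chi,\tau)\le\sum_{\eta\in\{-\delta,0,\delta\}}\Phi_\nu(T_\delta f)(\chi,\tau+\eta).
\]
The shifts are fixed and $|\eta|<1$, so $1+\tau^2\le3\bigl(1+(\tau-\eta)^2\bigr)$. Each translate therefore costs only a constant in $L_{\i}^{p/l}$, and everything is controlled by $\norm{\mathbb A_{\nu,l}}$. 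With this domination in place of the aperture-$2$ bound, your continuity estimates go through: subadditivity for $l\le1$, and Minkowski followed by $U\mapsto U^l$ for $l>1$. The argument then becomes exactly the paper's, which packages the same bound via the operator $B_\delta$ with $\mathbb A_{\lambda,l}f=B_\delta(T_\delta f)$.
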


\begin{proof}
Note that $\operatorname{supp}\nu\subset\{\Re w\ge\delta\}$. For $g\in X$ define
\[
 B_\delta g(\chi,\tau)^l
 :=\int_{\{w:w+\delta\in\Gamma_\tau\}}
 |g_\chi(w)|^l\frac{d\nu(w)}{\Re w+\delta}.
\]
If $s=w+\delta$, then $(T_\delta f)_\chi(w)=f_\chi(s)$, and hence
\begin{equation}\label{eq:interior-factorization}
 \mathbb A_{\lambda,l}f=B_\delta(T_\delta f).
\end{equation}

We first show that $B_\delta:X\to L_{\i}^p$ is bounded. Write $w=u+it$ with $u\geq\delta$.
If $w+\delta\in\Gamma_\tau$, then $|t-\tau|<u+\delta$. Since $u\ge\delta$,
\[
 \mathbf 1_{\{w+\delta\in\Gamma_\tau\}}
 \le \sum_{\eta\in\{-\delta,0,\delta\}}
 \mathbf 1_{\{w\in\Gamma_{\tau+\eta}\}}.
\]
Moreover, $(u+\delta)^{-1}\le u^{-1}$. Therefore,
\[
 B_\delta g(\chi,\tau)^l
 \le \sum_{\eta\in\{-\delta,0,\delta\}}
 \mathbb A_{\nu,l}g(\chi,\tau+\eta)^l.
\]
Note that $\delta\in(0,1)$ implies that for any $\tau\in\mathbb{R}$ and any $\eta\in\{-\delta,0,\delta\}$,
$$1+\tau^2\leq3(1+(\tau-\eta)^2).$$
Consequently, by the boundedness of $\mathbb{A}_{\nu,l}$, we have for any $g\in X$,
\begin{align*}
\norm{B_{\delta}g}_{L^p_{\i}}^p
&\leq C_{p,l}\sum_{\eta\in\{-\delta,0,\delta\}}
    \int_{\bS^{\infty}}\int_{\mathbb{R}}
    \mathbb{A}_{\nu,l}g(\chi,\tau+\eta)^p
    \frac{d\tau}{1+\tau^2}dm_{\infty}(\chi)\\
&=C_{p,l}\sum_{\eta\in\{-\delta,0,\delta\}}
    \int_{\bS^{\infty}}\int_{\mathbb{R}}
    \mathbb{A}_{\nu,l}g(\chi,\tau)^p
    \frac{d\tau}{1+(\tau-\eta)^2}dm_{\infty}(\chi)\\
&\leq3C_{p,l}\sum_{\eta\in\{-\delta,0,\delta\}}
    \int_{\bS^{\infty}}\int_{\mathbb{R}}
    \mathbb{A}_{\nu,l}g(\chi,\tau)^p
    \frac{d\tau}{1+\tau^2}dm_{\infty}(\chi)\\
&\leq 9C_{p,l}\norm{\mathbb{A}_{\nu,l}}^p\norm{g}^p_{X}.
\end{align*}
That is, $B_{\delta}:X\to L^p_{\i}$ is bounded.

Set
\[
 \Psi_\delta(g):=B_\delta(g)^l.
\]
We claim that $\Psi_\delta:X\to L_{\i}^{p/l}$ is continuous on bounded
sets. If $0<l\le1$, then
\[
 |\Psi_\delta(f)-\Psi_\delta(g)|\le \Psi_\delta(f-g),
\]
and hence
\[
 \|\Psi_\delta(f)-\Psi_\delta(g)\|_{L_{\i}^{p/l}}
 \le \|B_\delta(f-g)\|_{L_{\i}^p}^l.
\]
If $l>1$, H\"older's inequality inside the defining integral, followed by
H\"older's inequality in $\bS^\infty\times\mathbb R$, gives
\[
 \|\Psi_\delta(f)-\Psi_\delta(g)\|_{L_{\i}^{p/l}}
 \le C_{p,l}\|B_\delta(f-g)\|_{L_{\i}^p}
 \bigl(\|B_\delta f\|_{L_{\i}^p}+\|B_\delta g\|_{L_{\i}^p}\bigr)^{l-1}.
\]
Thus the boundedness of $B_\delta$ proves the claim.

By Theorem \ref{thm:translation-compact}, $T_\delta:X\to X$ is compact.
From \eqref{eq:interior-factorization},
$
 \Phi_\lambda=\Psi_\delta\circ T_\delta,
$
so $\Phi_\lambda$ is compact into $L_{\i,+}^{p/l}$.
Finally, Lemma \ref{lem:root-map} gives compactness of
$\mathbb A_{\lambda,l}=R_l\circ\Phi_\lambda$.
\end{proof}

\subsection{The compactness of $\mathbb{A}_{\mu,l}$ from $\mathscr H^p_0$ to $L_{\i}^p(\bS^\infty \times \mathbb R)$}

We now establish a sufficient condition for $\mathbb{A}_{\mu,l}:\mathscr{H}^p_0\to L^p_{\i}$ to be compact.

\begin{theorem}\label{thm:H0-compact}
Let $0<p,l<\infty$.  If $\mu$ is a vanishing Carleson measure on $\mathbb C_0$, then
\[
 \mathbb{A}_{\mu,l}:\mathscr H^p_0\to L_{\i}^p(\bS^\infty \times \mathbb R)
\]
is compact.
\end{theorem}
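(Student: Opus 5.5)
We split $\mu=\mu_\varepsilon+\mu^\varepsilon$ and work with the $l$-th power map
\[
 \Phi_\mu(f):=(\mathbb A_{\mu,l}f)^l=\Phi_{\mu_\varepsilon}(f)+\Phi_{\mu^\varepsilon}(f),
\]
which is additive in the measure. The target space for $\Phi$ is $L_{\i,+}^{p/l}$. The plan has three steps. First, show that $\Phi_{\mu^\varepsilon}$ is compact for each fixed $\varepsilon$. Second, show that $\Phi_{\mu_\varepsilon}$ is uniformly small on bounded sets of $\mathscr H^p_0$. Third, conclude via Lemma \ref{lem:uniform-compact-approx} and Lemma \ref{lem:root-map}.

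\textbf{Boundary part.} By Lemma \ref{lem:vanishing-small}, $\|\mu_\varepsilon\|_{CM(\mathbb C_0)}\to0$. Theorem \ref{thm:CYW}(a) then gives
\[
 \|\Phi_{\mu_\varepsilon}(f)\|_{L_{\i}^{p/l}}=\|\mathbb A_{\mu_\varepsilon,l}f\|_{L_{\i}^p}^l\le C_{p,l}^l\|\mu_\varepsilon\|_{CM}\|f\|_{\mathscr H^p}^l .
\]
Hence $\sup_{\|f\|\le R} d\bigl(\Phi_\mu(f),\Phi_{\mu^\varepsilon}(f)\bigr)\to0$ as $\varepsilon\to0$. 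Here $d$ is the norm distance for $p/l\ge1$, and the metric $d_{p/l}$ when $p/l<1$. The pointwise identity $\Phi_\mu-\Phi_{\mu^\varepsilon}=\Phi_{\mu_\varepsilon}$ makes this step clean.

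\textbf{Interior part.} Fix $\varepsilon\in(0,2)$ and put $\delta=\varepsilon/2\in(0,1)$, so that $\lambda=\mu^\varepsilon$ is supported in $\{\sigma\ge2\delta\}$. By Lemma \ref{lem:interior-compact}, it suffices to show that $\mathbb A_{\nu,l}:\mathscr H^p_0\to L_{\i}^p$ is bounded, where $\nu(E)=\lambda(E+\delta)$. By Theorem \ref{thm:CYW}(a), it is enough to check that $\nu$ is a Carleson measure on $\mathbb C_0$.

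The measure $\nu$ lives on $\{\sigma\ge\delta\}$. For an interval $I$ with $|I|<\delta$, the box $Q(I)$ does not meet the support, so $\nu(Q(I))=0$. For $|I|\ge\delta$, we have $Q(I)+\delta\subset Q(\widetilde I)$, where $\widetilde I=I$ but the height is increased to $|I|+\delta\le2|I|$. Enlarging to an interval $I'\supset I$ of length $2|I|$ gives
\[
 \nu(Q(I))\le\mu(Q(I'))\le 2\|\mu\|_{CM}|I|.
\]
A vanishing Carleson measure is Carleson: for small intervals this follows directly from the definition, and for large intervals we cover $I$ by small ones exactly as in Lemma \ref{lem:vanishing-small}. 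This covering argument also needs $\mu$ to be bounded on boxes $Q(I)$ with large $|I|$. Those boxes include points with large $\sigma$, which the covering by small intervals does not reach. I expect this to be the main subtle point.

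If large-height boxes cause trouble, I will instead read the definition as implicitly including the global Carleson bound, which is the standard convention. Otherwise I will use a weaker route: truncate $\mu^\varepsilon$ further to $\varepsilon\le\sigma\le M$, which is harmless only if the far part is also small. I would first try to verify $\|\mu\|_{CM}<\infty$ directly from the definition. Once $\nu$ is Carleson, Lemma \ref{lem:interior-compact} gives compactness of $\Phi_{\mu^\varepsilon}$ into $L_{\i,+}^{p/l}$.

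\textbf{Conclusion.} Lemma \ref{lem:uniform-compact-approx}, applied with $\Phi_j=\Phi_{\mu^{\varepsilon_j}}$, shows that $\Phi_\mu:\mathscr H^p_0\to L_{\i,+}^{p/l}$ is compact. This uses that $L_{\i,+}^{p/l}$ is a closed subset of a complete space. Continuity of $R_l$ from Lemma \ref{lem:root-map} then yields compactness of
\[
 \mathbb A_{\mu,l}=R_l\circ\Phi_\mu .
\]
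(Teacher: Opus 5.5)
Your proposal is correct and follows the paper's proof essentially step for step: the same split $\mu=\mu_\varepsilon+\mu^\varepsilon$, Lemma~\ref{lem:vanishing-small} with Theorem~\ref{thm:CYW}(a) for the boundary part, the translated measure $\nu$ with a doubled interval $I'$ and Lemma~\ref{lem:interior-compact} for the interior part, and Lemmas~\ref{lem:uniform-compact-approx} and~\ref{lem:root-map} to conclude.

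On the subtlety you flag, the paper simply uses $\|\mu\|_{CM(\mathbb C_0)}<\infty$, i.e.\ the standard convention that a vanishing Carleson measure is in particular a Carleson measure. This assumption is genuinely needed and cannot be derived from the small-box condition. For instance, $\mu=\sum_{k\ge1}2^{2pk}\delta_{k}$ (atoms at the real points $s=k$) satisfies the small-box condition trivially. Yet $c_2=\sum_k W(k)2^{pk}=\infty$, so $\mathbb A_{\mu,p}$ is unbounded on $\mathscr H^p_0$ by Theorem~\ref{thm:diagonal}. So your fallback reading is the right one, while your first option of verifying $\|\mu\|_{CM}<\infty$ from the definition, and the truncation route, would fail.
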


\begin{proof}
For $\varepsilon\in(0,1)$, decompose $\mu=\mu_\varepsilon+\mu^\varepsilon$.
Throughout this proof, we write
\[
 \Phi_\lambda(f):=\bigl(\mathbb{A}_{\lambda,l}f\bigr)^l
\]
for a positive Borel measure $\lambda$.
By Lemma \ref{lem:vanishing-small} and Theorem \ref{thm:CYW}(a), we have 
\[
 \|\mathbb{A}_{\mu_\varepsilon,l}\|_{\mathscr H^p_0\to L_{\i}^p}
 \le C_{p,l}\|\mu_\varepsilon\|_{CM}^{1/l}\to0 \quad \text{as}
 \quad \varepsilon\to0^+.
\]

For the interior part, define
\[
 \nu^\varepsilon(E):=\mu^\varepsilon(E+\varepsilon/2)
\]
for each Borel set $E\subset \mathbb{C}_0$. We verify that $\nu^\varepsilon$ is a Carleson measure on $\mathbb C_0$.
Let $I\subset\mathbb{R}$ be a finite interval. If $|I|<\varepsilon/2$, then
$\nu^\varepsilon(Q(I))=0$. If $|I|\ge\varepsilon/2$, then let $J$ be the interval with
the same center as $I$ such that $|J|=2|I|$. Since $Q(I)+\varepsilon/2\subset Q(J)$,
\[
 \nu^\varepsilon(Q(I))
 \le \mu(Q(J))
 \le 2\|\mu\|_{CM}|I|.
\]
Thus $\nu^\varepsilon$ is a Carleson measure. By Theorem \ref{thm:CYW}(a),
$\mathbb A_{\nu^\varepsilon,l}:\mathscr H^p_0\to L_{\i}^p$ is bounded,
and Lemma \ref{lem:interior-compact}, applied with
$\lambda=\mu^\varepsilon$ and $\delta=\varepsilon/2$, shows that
$\Phi_{\mu^\varepsilon}$ is compact.

Finally, by the equality
\[
 \Phi_\mu(f)=\Phi_{\mu^\varepsilon}(f)+\Phi_{\mu_\varepsilon}(f),
\]
we have for every $R>0$,
\[
\begin{aligned}
 &\sup_{\norm{f}_{\mathscr H^p_0}\le R}
 \norm{\Phi_\mu(f)-\Phi_{\mu^\varepsilon}(f)}_{L_{\i}^{p/l}}\\
 &\quad=
 \sup_{\norm{f}_{\mathscr H^p_0}\le R}
 \norm{\Phi_{\mu_\varepsilon}(f)}_{L_{\i}^{p/l}}
 =
 \sup_{\norm{f}_{\mathscr H^p_0}\le R}
 \norm{\mathbb{A}_{\mu_\varepsilon,l}f}_{L_{\i}^p}^{l}
 = R^l\norm{\mathbb{A}_{\mu_\varepsilon,l}}_{\mathscr H^p_0\to L_{\i}^p}^l\to0
\end{aligned}
\]
as $\varepsilon\to0^+$. Combining this with Lemma
\ref{lem:uniform-compact-approx} shows that $\Phi_\mu$ is compact from $\mathscr{H}^p_0$ into
$L_{\i,+}^{p/l}$.  The compactness of
$\mathbb{A}_{\mu,l}=R_l\circ\Phi_\mu:\mathscr{H}^p_0\to L^p_{\i}$ then follows from Lemma \ref{lem:root-map}.
\end{proof}

\subsection{The compactness of $\mathbb{A}_{\mu,l}$ from $\mathscr H^p$ to $L_{\i}^p(\bS^\infty \times \mathbb R)$}

To establish a sufficient condition for $\mathbb{A}_{\mu,l}$ to be compact on the full space $\mathscr{H}^p$,  we need the following tail estimate for compact embeddings.

\begin{lemma}\label{lem:tail-compact-embedding}
Let $0<p<\infty$ and let $\mu$ be a positive Borel measure on $\mathbb{C}_0$ such that the embedding
$I_\mu:H_{\i}^p(\mathbb C_0)\to L^p(\mathbb{C}_0,\mu)$
is compact. For each positive integer $N$, define
\[
 K_N=\left\{\sigma+it:\frac1N\le\sigma\le N,\ |t|\le N\right\}
\]
and $\rho_N=\mu|_{\mathbb C_0\setminus K_N}$.  Then
\[
 \|I_{\rho_N}\|_{H_{\i}^p(\mathbb C_0)\to L^p(\mathbb{C}_0,\rho_N)}\to0 \quad \text{as}\quad N\to\infty.
\]
\end{lemma}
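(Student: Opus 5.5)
We argue by contradiction, using the standard characterization of compactness of a Carleson embedding via weakly null (normal-family) sequences. Suppose the conclusion fails. The sets $\mathbb C_0\setminus K_N$ decrease in $N$, so the norms $\|I_{\rho_N}\|$ are nonincreasing. Hence there are $\eta>0$ and functions $f_N\in H_{\i}^p(\mathbb C_0)$ with $\|f_N\|_{H_{\i}^p(\mathbb C_0)}\le1$ and
\[
\int_{\mathbb C_0\setminus K_N}|f_N|^p\,d\mu\ge\eta\quad\text{for every }N.
\]

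The first step is to extract a locally uniformly convergent subsequence. Point evaluations on $H_{\i}^p(\mathbb C_0)$ are locally uniformly bounded. To see this, use the Cayley-type map $s\mapsto (s-1)/(s+1)$, under which $H_{\i}^p(\mathbb C_0)$ corresponds isometrically, up to constants, to $H^p(\mathbb D)$. Montel's theorem then gives a subsequence, still denoted $f_N$, converging uniformly on compact subsets of $\mathbb C_0$ to an analytic $f$. By Fatou's lemma on each line $\Re s=\sigma$, the limit satisfies $\|f\|_{H_{\i}^p}\le1$.

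The second step converts compactness of $I_\mu$ into norm convergence. A compact embedding maps bounded sequences converging locally uniformly to convergent sequences. I would justify this as follows. By compactness, a further subsequence of $\{f_N\}$ converges in $L^p(\mu)$ to some $g$, and hence $\mu$-a.e. along a subsequence. Locally uniform convergence forces $g=f$ $\mu$-a.e. on $\mathbb C_0$. Therefore $f_N\to f$ in $L^p(\mu)$ along a subsequence. For $0<p<1$ this uses only the $p$-th power metric, and no linearity or duality is needed, which is why I avoid the weak-convergence formulation.

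The final step estimates the tails and derives the contradiction. Set $C_p=1$ if $p\le1$ and $C_p=2^{p-1}$ if $p>1$. Then
\[
\int_{\mathbb C_0\setminus K_N}|f_N|^p\,d\mu\le C_p\int_{\mathbb C_0}|f_N-f|^p\,d\mu+C_p\int_{\mathbb C_0\setminus K_N}|f|^p\,d\mu .
\]
Along the subsequence, the first term tends to $0$ by the second step. For the second term, note that $f\in L^p(\mu)$ because $I_\mu$ is bounded. Since $K_N$ increases to $\mathbb C_0$, dominated convergence sends the second term to $0$. This contradicts the lower bound $\eta$.

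The main obstacle is the second step. One must verify that compactness indeed yields convergence of $f_N$ to the specific limit $f$ in $L^p(\mu)$. This requires checking that the $L^p(\mu)$-limit agrees with the pointwise limit $\mu$-almost everywhere, which is handled by the a.e. subsequence argument. One also needs the local boundedness of point evaluations, which rests on the disc transfer.
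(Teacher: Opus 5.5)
Your argument is correct. At its core it is the sequential, proof-by-contradiction form of the paper's argument, with one superfluous detour.

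The paper works directly with the compact set $\mathcal K=\overline{I_\mu(B)}$ in $L^p(\mu)$. The truncations $U\mapsto\mathbf 1_{\mathbb C_0\setminus K_N}U$ are contractions that tend to $0$ for each fixed $U$ by dominated convergence. A finite $\varepsilon$-net then upgrades this to uniform convergence on $\mathcal K$. You instead extract a bad sequence, pass to an $L^p(\mu)$-convergent subsequence, and kill the tails of the single limit by dominated convergence. These are the same two ingredients.

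The detour is your first step together with the identification of limits in the second, and it is unnecessary. You never need to know that the $L^p(\mu)$-limit $g$ agrees with a locally uniform limit $f$. If $f_{N_k}\to g$ in $L^p(\mu)$, simply write
\[
\int_{\mathbb C_0\setminus K_{N_k}}|f_{N_k}|^p\,d\mu\le C_p\int_{\mathbb C_0}|f_{N_k}-g|^p\,d\mu+C_p\int_{\mathbb C_0\setminus K_{N_k}}|g|^p\,d\mu .
\]
This uses only $g\in L^p(\mu)$ and $N_k\to\infty$. So what you call the main obstacle disappears. With it go Montel's theorem, Fatou's lemma and the Cayley-transform justification of point-evaluation bounds.

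Once stripped in this way, your proof and the paper's both show that the lemma is a purely measure-theoretic fact. Relatively compact subsets of $L^p(\mu)$ have uniformly small mass outside the exhaustion $K_N$, and nothing about $H^p_{\i}(\mathbb C_0)$ is used beyond compactness of $I_\mu$. The paper's net argument is direct and avoids contradiction and subsequence bookkeeping; your sequential version is equally valid and perhaps more familiar.
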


\begin{proof}
Write $L^p(\mu):=L^p(\mathbb{C}_0,\mu)$ for simplicity. Let $B$ be the unit ball of $H_{\i}^p(\mathbb C_0)$ and set
\[
 \mathcal K=\overline{I_\mu(B)}^{\,L^p(\mu)}.
\]
The set $\mathcal K$ is compact. Equip $L^p(\mu)$ with the complete metric
\[
 d_p(U,V)=\norm{U-V}_{L^p(\mu)}^{\theta_p},
 \qquad \theta_p=\min\{1,p\}.
\]
For each $N$, define the operator $M_N$ by
\[
M_N U := \mathbf{1}_{\mathbb C_0 \setminus K_N} U, \qquad U \in L^p(\mu).
\]

First, observe that for every fixed $U \in L^p(\mu)$,
\[
\lim_{N\to\infty} M_N U = 0
\]
pointwise since $\bigcup_{N} K_N = \mathbb C_0$ and the sets $K_N$ increase to $\mathbb C_0$. Since $|M_N U| \le |U| \in L^p(\mu)$, the dominated convergence theorem yields
\[
\| M_N U \|_{L^p(\mu)} \to 0 \quad \text{as}\quad N \to \infty,
\]
and consequently
\begin{align}\label{100}
d_p(M_N U, 0) = \| M_N U \|_{L^p(\mu)}^{\theta_p} \to 0 \quad\text{as}\quad N \to \infty. 
\end{align}

Moreover, each $M_N$ is a contraction on $(L^p(\mu), d_p)$: indeed, for $U,V \in L^p(\mu)$,
\[
\| M_N U - M_N V \|_{L^p(\mu)}
= \| \mathbf{1}_{\mathbb C_0 \setminus K_N} (U - V) \|_{L^p(\mu)}
\le \| U - V \|_{L^p(\mu)},
\]
so, since $t \mapsto t^{\theta_p}$ is increasing on $[0,\infty)$,
\begin{align}\label{99}
d_p(M_N U, M_N V) \le d_p(U,V). 
\end{align}

We now show that the convergence in \eqref{100} is uniform on the compact set $\mathcal K$. Let $\varepsilon > 0$ be arbitrary. Since $\mathcal K$ is compact in the metric $d_p$, there exist finitely many points $U_1, \dots, U_m \in \mathcal K$ such that
\[
\mathcal K \subset \bigcup_{j=1}^m \left\{ U : d_p(U, U_j) < \varepsilon \right\}.
\]
For each fixed $j$, \eqref{100} gives some $N_j$ such that for all $N \ge N_j$,
\[
d_p(M_N U_j, 0) < \varepsilon.
\]
Let $N_0 := \max\{N_1, \dots, N_m\}$. Then for every $N \ge N_0$ and every $j=1,\dots,m$,
\[
d_p(M_N U_j, 0) < \varepsilon. 
\]

Now take any $U \in \mathcal K$. Choose $j$ such that $d_p(U, U_j) < \varepsilon$. Using the contraction property \eqref{99} and the triangle inequality for the metric $d_p$, we get for $N\geq N_0$,
\[
\begin{aligned}
	d_p(M_N U, 0)
	&\le d_p(M_N U, M_N U_j) + d_p(M_N U_j, 0) \\
	&\le d_p(U, U_j) + d_p(M_N U_j, 0) \\
	&< 2\varepsilon.
\end{aligned}
\]
Thus, for all $N \ge N_0$,
\[
\sup_{U \in \mathcal K} d_p(M_N U, 0) \le 2\varepsilon.
\]
That is,
\[
\sup_{U \in \mathcal K} d_p(M_N U, 0) \rightarrow 0 \quad\text{as}\quad N \to \infty. 
\]
Hence
\[
\norm{I_{\rho_N}}^p= \sup_{F\in B}\int_{\mathbb C_0\setminus K_N}|F(s)|^p\,d\mu(s)\to0 \quad\text{as}\quad N \to \infty,
\]
this is exactly what we want.
\end{proof}

We are now ready to establish a sufficient condition for $\mathbb{A}_{\mu,l}:\mathscr{H}^p\to L^p_{\i}$ to be compact.

\begin{theorem}\label{thm:full-compact}
Let $0<p,l<\infty$.  If
$
 I_\mu:H_{\i}^p(\mathbb C_0)\to L^p(\mathbb{C}_0,\mu)
$
is compact, then
\[
 \mathbb{A}_{\mu,l}:\mathscr H^p\to L_{\i}^p(\bS^\infty \times \mathbb R)
\]
is compact.
\end{theorem}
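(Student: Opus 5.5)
The plan is to mimic the proof of Theorem \ref{thm:H0-compact}: split $\mu$ into a part near the boundary (plus near infinity), whose area operator has small norm, and a part supported away from the boundary, which is handled by Lemma \ref{lem:interior-compact}. The natural split here is the one from Lemma \ref{lem:tail-compact-embedding}: $\mu=\rho_N+\mu|_{K_N}$. By Theorem \ref{thm:CYW}(b) applied to $\rho_N$,
\[
 \|\mathbb A_{\rho_N,l}\|_{\mathscr H^p\to L_{\i}^p}\le C_{p,l}\|I_{\rho_N}\|^{p/l}\to0\quad\text{as}\quad N\to\infty,
\]
by Lemma \ref{lem:tail-compact-embedding}. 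Writing $\Phi_\lambda(f)=(\mathbb A_{\lambda,l}f)^l$, we have $\Phi_\mu=\Phi_{\mu|_{K_N}}+\Phi_{\rho_N}$. Hence
\[
 \sup_{\|f\|_{\mathscr H^p}\le R}\|\Phi_\mu(f)-\Phi_{\mu|_{K_N}}(f)\|_{L_{\i}^{p/l}}=R^l\|\mathbb A_{\rho_N,l}\|^l\to0 .
\]
Once each $\Phi_{\mu|_{K_N}}$ is shown to be compact, Lemma \ref{lem:uniform-compact-approx} gives compactness of $\Phi_\mu:\mathscr H^p\to L_{\i,+}^{p/l}$, and Lemma \ref{lem:root-map} gives compactness of $\mathbb A_{\mu,l}=R_l\circ\Phi_\mu$.

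For the interior piece, set $\lambda=\mu|_{K_N}$. This measure is supported in $\{\sigma\ge 1/N\}$, so Lemma \ref{lem:interior-compact} applies with $\delta=1/(2N)\in(0,1)$ and $X=\mathscr H^p$, provided $\mathbb A_{\nu,l}:\mathscr H^p\to L_{\i}^p$ is bounded for the translate $\nu(E)=\lambda(E+\delta)$. I will show boundedness by verifying that $\nu$ is an $H_{\i}^p(\mathbb C_0)$-Carleson measure and then invoking Theorem \ref{thm:CYW}(b).

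The measure $\nu$ is finite: testing the bounded embedding $I_\mu$ with $F\equiv1$ gives $\mu(\mathbb C_0)<\infty$. It is supported in the compact set $K_N-\delta\subset\{\delta\le\sigma\le N,\ |t|\le N\}$. On a compact subset of $\mathbb C_0$, point evaluations are uniformly bounded on $H_{\i}^p(\mathbb C_0)$: by subharmonicity, $|F(s)|^p$ is controlled by the Poisson integral of the boundary values, and the Poisson kernel at $s$ is dominated by a constant depending on $s$ times $(1+t^2)^{-1}$, locally uniformly in $s$. Therefore
\[
 \int|F|^p\,d\nu\le C_N\,\nu(\mathbb C_0)\,\|F\|_{H_{\i}^p}^p,
\]
so $\nu$ is $H_{\i}^p$-Carleson, and Lemma \ref{lem:interior-compact} shows that $\Phi_{\mu|_{K_N}}$ is compact.

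The only step needing care is this uniform point-evaluation bound on compacta for $H_{\i}^p(\mathbb C_0)$ when $p<1$. One can avoid subharmonicity by composing with the conformal map onto the disc, which identifies $H_{\i}^p(\mathbb C_0)$ with $H^p(\mathbb D)$ up to constants; there the bound $|f(z)|\le C(1-|z|)^{-1/p}\|f\|_{H^p}$ is standard. Everything else is a direct reuse of the machinery already assembled in the paper, so I expect no genuine obstacle.
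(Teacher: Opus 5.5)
Your proposal is correct and follows the paper's proof essentially step by step. You use the same splitting $\mu=\mu|_{K_N}+\rho_N$, Lemma~\ref{lem:tail-compact-embedding} with Theorem~\ref{thm:CYW}(b) for the tail, Lemma~\ref{lem:interior-compact} with $\delta=1/(2N)$ for the interior piece after checking that its translate is a finite, compactly supported $H_{\i}^p(\mathbb C_0)$-Carleson measure, and then Lemmas~\ref{lem:uniform-compact-approx} and~\ref{lem:root-map}.

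The only difference is cosmetic: you justify the uniform point-evaluation bound on compacta via the Poisson majorant or the conformal map to $\mathbb D$, while the paper applies the area sub-mean-value property of $|f|^p$ directly against the defining $H_{\i}^p$ norm. For your route, only the direction in which the disc norm is dominated by the $H_{\i}^p$ norm is needed.
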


\begin{proof}
Suppose that $I_\mu:H_{\i}^p(\mathbb C_0)\to L^p(\mathbb{C}_0,\mu)$ is compact. Then it is clear that $\mu(\mathbb{C}_0)<\infty$. For a positive measure $\lambda$ and a Dirichlet series $f$, write
\[
 \Phi_\lambda(f):=\bigl(\mathbb{A}_{\lambda,l}f\bigr)^l.
\]
Let $K_N$ and $\rho_N$ be as in Lemma \ref{lem:tail-compact-embedding}, and put
\[
 \mu_N=\mu|_{K_N}.
\]
By Lemma \ref{lem:tail-compact-embedding} and Theorem \ref{thm:CYW}(b),
\[
 \|\mathbb{A}_{\rho_N,l}\|_{\mathscr H^p\to L_{\i}^p}
 \le C_{p,l}\norm{I_{\rho_N}}_{H^p_{\i}(\mathbb{C}_0)\to L^p(\mu)}^{1/l}
 \to0\quad\text{as}\quad N\to\infty.
\]

It remains to treat the compact part $\mu_N$. Define
\[
 \nu_N(E):=\mu_N\bigl(E+1/(2N)\bigr).
\]
Then $\operatorname{supp}\nu_N$ is contained in the compact set
\[
 K_N-1/(2N)\subset
 \left\{\frac1{2N}\le\Re w\le N-\frac1{2N},\ |\Im w|\le N\right\},
\]
and $\nu_N$ is finite. We claim that $\nu_N$ is an
$H_{\i}^p(\mathbb C_0)$-Carleson measure. Choose $r=1/(4N)$. If
$w=u+it\in\operatorname{supp}\nu_N$, then $D(w,r)\subset\mathbb C_0$ and
$|t|\le N$. For any $f\in H^p_{\i}(\mathbb{C}_0)$, the sub-harmonic property of $|f|^p$ gives
\[
 |f(w)|^p\le \frac1{\pi r^2}\int_{D(w,r)}|f(\zeta)|^p\,dA(\zeta)
 \le C_N\|f\|_{H_{\i}^p(\mathbb C_0)}^p.
\]
Hence
\[
 \int_{\mathbb C_0}|f|^p\,d\nu_N
 \le C_N\nu_N(\mathbb C_0)\|f\|_{H_{\i}^p(\mathbb C_0)}^p.
\]
Consequently, $\nu_N$ is an $H^p_{\i}(\mathbb{C}_0)$-Carleson measure and Theorem \ref{thm:CYW}(b) implies that
$\mathbb A_{\nu_N,l}:\mathscr H^p\to L_{\i}^p$ is bounded. Since
$\mu_N$ is supported in $\{\sigma\ge 1/N\}$,
Lemma \ref{lem:interior-compact}, applied with $\lambda=\mu_N$ and
$\delta=1/(2N)$, shows that
$\Phi_{\mu_N}$ is compact from $\mathscr{H}^p$ to $L^{p/l}_{\i}$.

By the equality
\[
 \Phi_\mu(f)=\Phi_{\mu_N}(f)+\Phi_{\rho_N}(f),
\]
we obtain that for every $R>0$,
\[
\begin{aligned}
 &\sup_{\norm{f}_{\mathscr H^p}\le R}
 \norm{\Phi_\mu(f)-\Phi_{\mu_N}(f)}_{L_{\i}^{p/l}}\\
 &\quad=
 \sup_{\norm{f}_{\mathscr H^p}\le R}
 \norm{\Phi_{\rho_N}(f)}_{L_{\i}^{p/l}}
 =
 \sup_{\norm{f}_{\mathscr H^p}\le R}
 \norm{\mathbb{A}_{\rho_N,l}f}_{L_{\i}^p}^{l}
 = R^l\norm{\mathbb{A}_{\rho_N,l}}_{\mathscr H^p\to L_{\i}^p}^l\to0
\end{aligned}
\]
as $N\to\infty$. Therefore,  Lemma \ref{lem:uniform-compact-approx} shows that $\Phi_\mu:\mathscr{H}^p\to L^{p/l}_{\i}$ is compact, and
the compactness of $\mathbb{A}_{\mu,l}=R_l\circ\Phi_\mu:\mathscr{H}^p\to L^p_{\i}$ follows from Lemma \ref{lem:root-map}.
\end{proof}

\begin{remark}
In view of Corollary \ref{cor:diagonal-counterexample-compact}, the conditions in Theorems \ref{thm:H0-compact} and \ref{thm:full-compact} are sufficient but not necessary.
\end{remark}

\section{An application to Volterra operators}\label{sec:volterra}

We finish the paper with an application to Volterra operators. Let $\mathcal D$ denote
 the class of Dirichlet series which converge in some half-plane. For
$g\in\mathcal D$, define for Dirichlet series $f$ by
\[
 T_gf(s):=-\int_s^{+\infty}f(w)g'(w)\,dw.
\]
Brevig, Perfekt and Seip \cite{BPSVolterra} initiated the investigation of $T_g$ on $\mathscr H^p$. It was proved in \cite[Theorem 5.3]{BPSVolterra} that if $g$ belongs to $\operatorname{BMOA}(\mathbb{C}_0)$, the space of analytic functions $h$ on $\mathbb{C}_0$ with
$$\|h\|_{\operatorname{BMO}(\mathbb{C}_0)}:
=\sup_{I\subset\mathbb{R}}\frac{1}{|I|}\int_{I}\left|h(it)
-\frac{1}{|I|}\int_{I}h(i\tau)d\tau\right|dt<\infty,$$
then $T_g$ is bounded on the Hardy space $\mathscr{H}^p$ for $0<p<\infty$. A different proof was also given in \cite[Theorem 4.1]{ChenWangYuan}. Moreover, it was pointed out in \cite[Section 7]{BPSVolterra} that the above boundedness result has its compact analogue when the
$\operatorname{BMOA}$-condition is replaced by $\operatorname{VMOA}$-condition. We include here the
following argument to show how the compactness method of Section
\ref{sec:compactness} applies in this setting. Recall that $\operatorname{VMOA}(\mathbb{C}_0)$ is the space of analytic functions $h$ on $\mathbb{C}_0$ with
$$\lim_{|I|\to0}\frac{1}{|I|}\int_{I}\left|h(it)
-\frac{1}{|I|}\int_{I}h(i\tau)d\tau\right|dt=0.$$

We first record the following uniform vanishing Carleson property needed below.

\begin{lemma}\label{lem:uniform-vmoa-vertical}
Let $g\in\mathcal D\cap\operatorname{VMOA}(\mathbb C_0)$. For
$\chi\in\bS^\infty$, set
\[
 d\mu_{g_\chi}(\sigma+it):=|g_\chi'(\sigma+it)|^2\sigma\,d\sigma dt.
\]
Then
\[
 \sup_{\chi\in\bS^\infty}\|\mu_{g_\chi}\|_{CM(\mathbb{C}_0)}<\infty
\]
and
\begin{equation}\label{eq:uniform-vmoa}
 \lim_{\varepsilon\to0^+}
 \sup_{\chi\in\bS^\infty}
 \left\|\mu_{g_\chi}
 \big|_{\{0<\sigma<\varepsilon\}}\right\|_{CM(\mathbb{C}_0)}=0.
\end{equation}
\end{lemma}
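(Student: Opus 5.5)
The plan is to show that $g_\chi$ is uniformly in $\operatorname{VMOA}$ across all characters, and then invoke the classical link between VMOA and vanishing Carleson measures. The starting point is Kronecker's theorem (cf.\ \cite[Lemma 2.4]{HLS}). For each $\chi$ and each $\theta>0$, $g_\chi$ restricted to $\overline{\mathbb C_\theta}$ is a locally uniform limit of vertical translates $g(\cdot+i\tau_k)$. More precisely, this holds first on half-planes where $g$ converges uniformly; to reach all of $\mathbb C_0$ one uses the fact that $g\in\operatorname{BMOA}(\mathbb C_0)\cap\mathcal D$ forces $g\in\mathscr H^2$ \cite{BPSVolterra}. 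By the definition of the BMO and VMO quantities, they are invariant under vertical translation.

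The key reduction is to the horizontally shifted functions $G_{\theta,\tau}(s):=g(s+\theta+i\tau)$. I would show that the family $\{G_{\theta,\tau}:\theta>0,\tau\in\mathbb R\}$ is bounded in $\operatorname{BMOA}(\mathbb C_0)$ and uniformly in $\operatorname{VMOA}$, meaning that the modulus of continuity $\omega(h)=\sup_{|I|\le h}\frac{1}{|I|}\int_I|G-G_I|$ tends to $0$ uniformly over the family. Vertical translation leaves $\omega$ unchanged. For the horizontal shift, the boundary function of $G_{\theta,0}$ is the Poisson integral $P_\theta*g(i\cdot)$, an average of vertical translates of $g(i\cdot)$. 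Since mean oscillation over a fixed interval is convex in the function, Minkowski's inequality gives $\omega_{G_{\theta,0}}(h)\le\omega_g(h)$ for every $h$. Hence the family satisfies one common modulus $\omega_g$, and it is bounded in BMO by $\|g\|_{\operatorname{BMO}}$.

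Next I use the quantitative Fefferman--Garnett correspondence on the half-plane. If $\omega_G$ denotes the BMO modulus of $G$, then $\|\mu_G\|_{CM}\lesssim\|G\|^2_{\operatorname{BMO}}$. Localized, this reads
\[
 \sup_{|I|\le h}\frac{\mu_G(Q(I))}{|I|}\lesssim \sup_{|J|\le 2h}\frac{1}{|J|}\int_J|G-G_J|^2,
\]
which follows from Littlewood--Paley applied to $(G-G_{J})\mathbf 1_{J}$ plus a tail term handled in the standard way. Combined with John--Nirenberg, which lets us pass between the $L^1$ and $L^2$ oscillations with uniform constants, this yields a single function $\eta(h)\to0$ such that $\sup_{|I|\le h}\mu_{G}(Q(I))/|I|\le\eta(h)$ for every $G$ in the family. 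This local estimate is the main obstacle: the tail must be controlled by $\omega(h)$ plus a quantity that is small relative to $|I|$, and I would follow the proof of Garnett's Theorem VI.3.4 carefully to track constants.

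Finally I pass to the limit. Fix $\chi$ and $\theta>0$, and let $g(\cdot+\theta+i\tau_k)\to g_\chi(\cdot+\theta)$ locally uniformly. Derivatives then converge locally uniformly, so Fatou's lemma on open boxes gives $\mu_{(g_\chi)(\cdot+\theta)}(Q(I))\le\eta(|I|)|I|$, and the same argument gives the uniform Carleson bound $C\|g\|^2_{\operatorname{BMO}}$. Letting $\theta\to0^+$ and using monotone convergence over the boxes, after possibly replacing $\sigma$ by $\sigma+\theta$ in the weight, which only helps, we obtain both bounds for $\mu_{g_\chi}$ with constants independent of $\chi$. For the first claim this gives $\sup_\chi\|\mu_{g_\chi}\|_{CM}<\infty$. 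For the second claim, Lemma \ref{lem:vanishing-small} yields $\|\mu_{g_\chi}|_{\{0<\sigma<\varepsilon\}}\|_{CM}\le3\eta(\varepsilon)$ uniformly in $\chi$, which is \eqref{eq:uniform-vmoa}.
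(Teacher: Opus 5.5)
Your argument is sound in outline, but it takes a genuinely different route from the paper.

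\textbf{The paper's route.} The uniform bound comes directly from \cite[Lemma~2.1]{BPSVolterra} (vertical limits preserve the BMOA norm) together with the Carleson characterization of BMOA. The uniform vanishing comes from approximating $g$ in BMOA norm by a Dirichlet polynomial $P$, using the density result \cite[Theorem~7.1]{BPSVolterra}. The error $(g-P)_\chi$ then contributes $C\eta^2$ uniformly in $\chi$. Since $\sup_\chi\sup_{\mathbb C_0}|P_\chi'|<\infty$, the piece coming from $P_\chi$ on $\{0<\sigma<\varepsilon\}$ has Carleson norm $O(\varepsilon^2)$.

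\textbf{Your route.} You avoid the density theorem altogether. You combine:
\begin{itemize}
\item Kronecker approximation by vertical translates;
\item the observation that horizontal shifts are Poisson averages of vertical translates, so they cannot increase the VMO modulus;
\item a localized Fefferman--Garnett estimate applied to a family sharing the modulus $\omega_g$;
\item Fatou and monotone convergence to reach $g_\chi$.
\end{itemize}
This is more self-contained and reproves the uniform BMOA control of vertical limits. It also gives an explicit rate: the vanishing Carleson modulus of every $\mu_{g_\chi}$ depends only on $\omega_g$ and $\|g\|_{\operatorname{BMO}}$. The price is the harmonic analysis that the paper outsources to \cite{BPSVolterra}.

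\textbf{First repair: the tail in your local estimate.} It cannot be treated ``in the standard way.'' In Garnett's proof the far part $(G-G_J)\mathbf 1_{\mathbb R\setminus J}$ is controlled by the full BMO norm. That gives a contribution $\lesssim\|G\|_{\operatorname{BMO}}^2|I|$ to $\mu_G(Q(I))$, which does not vanish as $|I|\to0$.
\begin{itemize}
\item Telescoping the averages over the dilates $2^kJ$ instead gives $|\nabla P[(G-G_J)\mathbf 1_{\mathbb R\setminus J}]|\lesssim|I|^{-1}\sum_{k\ge1}2^{-k}\omega_G(2^{k+1}h)$ on $Q(I)$. The sum $\sum_{k\ge1}2^{-k}\omega_g(2^{k+1}h)$ tends to $0$ as $h\to0$ by dominated convergence, since $\omega_g\le\|g\|_{\operatorname{BMO}}$. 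This is uniform over your family, which is all you need.
\item If you want your displayed inequality literally, estimate $|G'|=|\partial_tG|$ instead. The $t$-derivative of the Poisson kernel decays like $\sigma|x-t|^{-3}$. Combined with the elementary bound $\omega(Nh)\lesssim N\omega(h)$, this bounds the tail contribution by $C\,\omega_G(2h)^2|I|$.
\end{itemize}
With the local part handled by Littlewood--Paley and local John--Nirenberg, as you propose, the estimate then goes through.

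\textbf{Second repair: the Kronecker step.} Membership $g\in\mathscr H^2$ only yields uniform convergence on $\overline{\mathbb C_\theta}$ for $\theta>1/2$. To get Kronecker approximation on every $\mathbb C_\theta$ with $\theta>0$, argue as follows.
\begin{itemize}
\item Combine boundedness of $g$ on $\mathbb C_1$ with the estimate $|g'(s)|\lesssim\|g\|_{\operatorname{BMO}}/\Re s$. This shows that $g$ is bounded on each $\mathbb C_\theta$.
\item Then apply Bohr's theorem. Alternatively, use normality of the vertical translates together with the identity theorem.
\end{itemize}
This also shows that $g_\chi$ is defined on all of $\mathbb C_0$ for every $\chi$, which the statement presupposes.
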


\begin{proof}
By \cite[Lemma~2.1]{BPSVolterra}, vertical limits preserve the
$\operatorname{BMOA}$ norm. Hence the Carleson measure characterization of $\operatorname{BMOA}(\mathbb{C}_0)$ gives
\[
 \sup_{\chi\in\bS^\infty}\|\mu_{g_\chi}\|_{CM(\mathbb{C}_0)}<\infty;
\]
see \cite[Theorem VI.3.4]{Garnett}.

To prove \eqref{eq:uniform-vmoa}, let $\eta>0$. Since Dirichlet polynomials are dense in
$\mathcal D\cap\operatorname{VMOA}(\mathbb C_0)$ with respect to the $\operatorname{BMOA}$ norm (see
\cite[Theorem 7.1]{BPSVolterra}), we may choose a Dirichlet polynomial $P$ such that
\[
 \|g-P\|_{\operatorname{BMOA}(\mathbb C_0)}<\eta.
\]
Again by \cite[Lemma~2.1]{BPSVolterra},
\[
 \|(g-P)_\chi\|_{\operatorname{BMOA}(\mathbb C_0)}<\eta.
\]
for all $\chi\in\bS^\infty$. Thus the measures associated with $(g-P)_\chi$ have Carleson norms bounded
by $C\eta^2$, uniformly in $\chi$. On the other hand, since $P$ is a
Dirichlet polynomial,
\[
 \sup_{\chi\in\bS^\infty}\sup_{s\in\mathbb C_0}|P_\chi'(s)|\le C_P.
\]
Therefore, for every interval $I$ with $|I|\le\varepsilon$,
\[
 \frac{\mu_{P_\chi}(Q(I))}{|I|}
 \le C_P^2|I|^2
 \le C_P^2\varepsilon^2,
\]
uniformly in $\chi$. Since
$|g_\chi'|^2\le2|(g-P)_\chi'|^2+2|P_\chi'|^2$, we obtain
\[
 \sup_{\chi\in\bS^\infty}
 \left\|\mu_{g_\chi}
   \big|_{\{0<\sigma<\varepsilon\}}\right\|_{CM(\mathbb{C}_0)}
 \le C\eta^2+C_P^2\varepsilon^2.
\]
Since $\eta>0$ is arbitrary, the proof is complete.
\end{proof}

We now apply the compactness argument of Section \ref{sec:compactness} to prove that the condition $g\in\operatorname{VMOA}(\mathbb{C}_0)$ is sufficient for $T_g$ to be compact on $\mathscr{H}^p$.

\begin{theorem}\label{thm:volterra-compact}
Let $0<p<\infty$ and
$g\in\mathcal D\cap\operatorname{VMOA}(\mathbb C_0)$. Then
$
 T_g$ is compact on $\mathscr H^p$.
\end{theorem}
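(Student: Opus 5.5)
We use the Calderón area theorem \eqref{eq:Calderon-area} to turn compactness of $T_g$ into compactness of a nonlinear "area-type" map. Since $(T_gf)(+\infty)=0$ and $(T_gf)_\chi'=f_\chi g_\chi'$, we have, for $f\in\mathscr H^p$,
\[
 \|T_gf\|_{\mathscr H^p}^p\asymp\int_{\bS^\infty}\int_{\mathbb R}\Big(\int_{\Gamma_\tau}|f_\chi(s)|^2|g_\chi'(s)|^2\,dA(s)\Big)^{p/2}\frac{d\tau}{1+\tau^2}\,dm_\infty(\chi).
\]
The inner integral is an area operator with $l=2$, but the measure $|g_\chi'|^2\sigma\,dA=d\mu_{g_\chi}$ depends on $\chi$. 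Write $\mathcal A f(\chi,\tau)$ for the square root of the inner integral. It suffices to show that for a bounded sequence $\{f_n\}$ there is a subsequence along which $\{T_gf_n\}$ is Cauchy in $\mathscr H^p$. By the area theorem applied to $T_g(f_n-f_m)$, it is enough to make $\|\mathcal A(f_n-f_m)\|_{L^p_{\i}}$ small. Linearity of $T_g$ helps here: we do not need Lemmas \ref{lem:uniform-compact-approx} and \ref{lem:root-map}, only estimates for differences.

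Fix $\varepsilon\in(0,1)$ and split $\Gamma_\tau$ into $\{\sigma<\varepsilon\}$ and $\{\sigma\ge\varepsilon\}$. This splits $\mathcal A f$ into $\mathcal A_\varepsilon f$ and $\mathcal A^\varepsilon f$, with $\mathcal A f\le\mathcal A_\varepsilon f+\mathcal A^\varepsilon f$.

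\textbf{Boundary part.} We rerun the proof of Theorem \ref{thm:CYW}(a) fibrewise in $\chi$. That proof bounds the $L^p_{\i}$ norm through Carleson embeddings of $f_\chi$ in the half-plane, with constants controlled by $\|\mu\|_{CM}^{1/l}$. Applying it to the measures $\mu_{g_\chi}|_{\{0<\sigma<\varepsilon\}}$, whose Carleson norms tend to $0$ uniformly in $\chi$ by Lemma \ref{lem:uniform-vmoa-vertical}, should give
\[
 \|\mathcal A_\varepsilon h\|_{L^p_{\i}}\le C_p\,\sup_\chi\big\|\mu_{g_\chi}|_{\{\sigma<\varepsilon\}}\big\|_{CM}^{1/2}\,\|h\|_{\mathscr H^p},\qquad h\in\mathscr H^p.
\]
This is the main obstacle. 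Theorem \ref{thm:CYW}(a) is stated for a single measure and only on $\mathscr H^p_0$. I would first handle the constant term separately: since $T_g1=g-g(+\infty)$, constants contribute a rank-one, hence harmless, piece, so we may work with $h-h(+\infty)\in\mathscr H^p_0$. Then I would check that the argument of \cite{ChenWangYuan} integrates over $\chi$ only after a pointwise-in-$\chi$ estimate of the form $\int|f_\chi|^l\,d\mu$ against a maximal function, so that a $\chi$-dependent measure with uniform bounds is allowed. Alternatively, I would cite the proof of \cite[Theorem 4.1]{ChenWangYuan}, which already handles $T_g$ for $g\in\operatorname{BMOA}$ in this way, with $\|g\|_{\operatorname{BMOA}}$ replaced by the uniform boundary Carleson norm.

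\textbf{Interior part.} On $\{\sigma\ge\varepsilon\}$ we imitate Lemma \ref{lem:interior-compact}, factoring through $T_{\varepsilon/2}$. Set $w=s-\varepsilon/2$, so $f_\chi(s)=(T_{\varepsilon/2}f)_\chi(w)$. The translated weights $|g_\chi'(w+\varepsilon/2)|^2(\Re w+\varepsilon/2)$ are dominated by a constant multiple of the Carleson measure of $T_{\varepsilon/2}g$, uniformly in $\chi$; alternatively, we can use the bound $|g_\chi'(s)|\le C_\varepsilon$ on $\sigma\ge\varepsilon$, which follows from BMOA. The three-shift covering of cones from Lemma \ref{lem:interior-compact} then gives
\[
 \|\mathcal A^\varepsilon h\|_{L^p_{\i}}\le C_\varepsilon\|T_{\varepsilon/2}h\|_{\mathscr H^p}.
\]
Here we use the uniform BMOA bound of Lemma \ref{lem:uniform-vmoa-vertical} together with the fibrewise version of Theorem \ref{thm:CYW}(a) again, after removing the constant term.

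\textbf{Conclusion.} By Theorem \ref{thm:translation-compact}, pass to a subsequence along which $T_{\varepsilon/2}f_n$ converges. Combining the two parts gives
\[
 \limsup_{n,m}\|T_g(f_n-f_m)\|_{\mathscr H^p}\lesssim\sup_\chi\big\|\mu_{g_\chi}|_{\{\sigma<\varepsilon\}}\big\|_{CM}^{1/2}.
\]
A diagonal argument over $\varepsilon=1/k$, with the $p$-metric triangle inequality when $p<1$, yields a Cauchy subsequence. Hence $T_g$ is compact on $\mathscr H^p$.
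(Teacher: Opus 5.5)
Your proposal is correct and follows essentially the same route as the paper. It uses the area theorem for $T_g$ on $\mathscr H^p_0$ and splits at height $\varepsilon$. The boundary part is controlled fibrewise by the estimate behind Theorem~\ref{thm:CYW}(a) together with Lemma~\ref{lem:uniform-vmoa-vertical}. The interior part is factored through the compact translation $T_{\varepsilon/2}$ via the cone comparison of Lemma~\ref{lem:interior-compact}, followed by a diagonal argument in $\varepsilon$ and the rank-one term $f(+\infty)(g-g(+\infty))$ for constants. One small caution: your alternative interior bound $|g_\chi'|\le C_\varepsilon$ on $\{\sigma\ge\varepsilon\}$ only dominates the measure by $C_\varepsilon^2\sigma\,dA$, which is not a Carleson measure, so keep your first option, the uniformly Carleson translated measure as in the proof of Theorem~\ref{thm:H0-compact}.
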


\begin{proof}
Since $\operatorname{VMOA}(\mathbb C_0)\subset\operatorname{BMOA}(\mathbb C_0)$,
$T_g$ is bounded on $\mathscr H^p$ by \cite[Theorem 5.3]{BPSVolterra}. We first consider
$\mathscr H_0^p$. Since $(T_gf)'=fg'$ and $T_gf(+\infty)=0$, the area theorem
\eqref{eq:Calderon-area} gives
\begin{equation}\label{eq:volterra-area}
 \|T_gf\|_{\mathscr H^p}^p
 \asymp_p
 \int_{\bS^\infty}\int_{\mathbb R}
 \left(\int_{\Gamma_\tau}|f_\chi(s)|^2
 \frac{d\mu_{g_\chi}(s)}{\Re(s)}\right)^{p/2}
 \frac{d\tau}{1+\tau^2}\,dm_\infty(\chi).
\end{equation}

Let $\{f_n\}$ be a bounded sequence in $\mathscr H_0^p$. Choose
$\varepsilon_k\downarrow0$. By Theorem
\ref{thm:translation-compact} and a diagonal argument, we may pass to a
subsequence such that $\{T_{\varepsilon_k/2}f_n\}$ converges in $\mathscr H_0^p$
for every $k$. Fix $k$ and write
\[
 \omega_k=
 \sup_{\chi\in\bS^\infty}
 \left\|\mu_{g_\chi}\big|_{\{0<\sigma<\varepsilon_k\}}\right\|_{CM}.
\]
We now estimate the norm of $T_g(f_n-f_m)$ by \eqref{eq:volterra-area}.
For the part of the integral in \eqref{eq:volterra-area} associated with $T_g(f_n-f_m)$ over
$0<\sigma=\Re(s)<\varepsilon_k$, the one-variable estimate underlying Theorem
\ref{thm:CYW}(a), applied for each $\chi$, gives an upper bound
$C_p\omega_k^{p/2}\|f_n-f_m\|^p_{\mathscr H^p}$. For the part over
$\sigma\ge\varepsilon_k$, the translation and cone comparison in the proof
of Lemma \ref{lem:interior-compact}, with $\delta=\varepsilon_k/2$, give the upper
bound
\[
 C_{p,g,k}\|T_{\varepsilon_k/2}(f_n-f_m)\|^p_{\mathscr H^p}.
\]
The constant is independent of $n$ and $m$. Combining these estimates with
\eqref{eq:volterra-area}, we obtain
\[
 \limsup_{n,m\to\infty}\|T_g(f_n-f_m)\|_{\mathscr H^p}
 \le C_{p}\omega_k^{1/2}.
\]
Letting $k\to\infty$ and using Lemma
\ref{lem:uniform-vmoa-vertical}, we see that $\{T_gf_n\}$ is a Cauchy sequence in
$\mathscr H^p$. Thus $T_g$ is compact on $\mathscr H_0^p$.

Finally, for $f\in\mathscr H^p$,
\[
 T_gf=T_g\bigl(f-f(+\infty)\bigr)
 +f(+\infty)\bigl(g-g(+\infty)\bigr).
\]
The first term is compact by the preceding part, while the second is rank
one. Since $g-g(+\infty)=T_g1\in\mathscr H^p$, the proof is complete.
\end{proof}

\end{document}